\newtheorem{theorem}{Theorem}
\newtheorem{lemma}[theorem]{Lemma}
\newtheorem{proposition}[theorem]{Proposition}
\newtheorem{definition}[theorem]{Definition}
\newtheorem{remark}[theorem]{Remark}
\theoremstyle{remark}
\renewcommand{\paragraph}[1]{\medskip\noindent\textbf{#1}}
\newcommand{\id}{\mathrm{id}}
\newcommand{\R}{\mathbb{R}}
\newcommand{\Z}{\mathbb{Z}}
\newcommand{\RP}{\mathbb{R}\textup{P}}
\renewcommand{\P}{\operatorname{P}}
\newcommand{\plspace}{\mathbb{R}\textup{P}^{3,3}}
\newcommand{\plquad}{\mathcal{Q}^{3,3}}
\newcommand{\plcone}{\mathbb{L}^{3,3}}
\newcommand{\plisolines}{\mathscr{L}^{3,3}}
\newcommand{\hypplanes}{\mathcal{P}_{\textrm{hyp}}}
\newcommand{\rp}[1]{\mathsf{#1}}
\newcommand{\rpc}[1]{\what{\mathsf{#1}}}
\newcommand{\pc}[1]{\what{#1}}
\newcommand{\pol}[1]{\operatorname{pol}[#1]}
\newcommand{\inc}[1]{\operatorname{inc}[#1]}
\newcommand{\vspan}[1]{\operatorname{span}(#1)}
\renewcommand{\phi}{\varphi}
\renewcommand{\epsilon}{\varepsilon}
\newcommand{\what}{\widehat}
\renewcommand{\implies}{\Rightarrow}
\title[Extension of discrete A-nets using hyperboloid patches]{
  Discretization of\\
  asymptotic line parametrizations\\
  using hyperboloid surface patches
}
\author{Emanuel Huhnen-Venedey \and Thilo R\"{o}rig} %
\address{Emanuel Huhnen-Venedey\vspace{-5pt}}
\address{Inst.~Mathematics MA 8-3, TU Berlin, 10623 Berlin, Germany}%
\email{huhnen@math.tu-berlin.de}
\address{Thilo R\"{o}rig\vspace{-5pt}} 
\address{Inst.~Mathematics MA 8-3, TU Berlin, 10623 Berlin, Germany}
\email{roerig@math.tu-berlin.de}
\thanks{The first author was partially supported by the German Academic Exchange Service (DAAD) and the University of New South Wales in Sydney.}
\keywords{discrete differential geometry, discrete asymptotic line
  parametrization, A-nets, hyperboloids, projective geometry, Pl\"ucker
  line geometry}
\begin{document}

\begin{abstract}
  Two-dimensional affine A-nets in $3$-space are quadrilateral meshes that
  discretize surfaces pa\-rametrized along asymptotic lines. The characterizing
  property of A-nets is planarity of vertex stars, so for generic A-nets the
  elementary quadrilaterals are skew. We classify the simply connected
  affine A-nets that can be extended to continuously differentiable surfaces by
  gluing hyperboloid surface patches into the skew quadrilaterals.  The
  resulting surfaces are called ``hyperbolic nets'' and are a novel
  piecewise smooth discretization of surfaces parametrized along asymptotic
  lines. It turns out that a simply connected affine A-net has to satisfy one
  combinatorial and one geometric condition to be extendable --
  all vertices have to be of even degree and all quadrilateral strips have to be ``equi-twisted''.
  Furthermore, if an A-net can be extended to a hyperbolic net, then there
  exists a 1-parameter family of such $C^1$-surfaces.
  It is briefly explained how the generation of hyperbolic nets can be implemented
  on a computer. The article uses the projective model
  of Pl\"ucker geometry to describe A-nets and hyperboloids.
\end{abstract}

\maketitle

\section{Introduction}
\label{sec:intro}

The present paper deals with the discretization of surfaces in 3-space that are
parametrized along asymptotic lines. Usually the discretization of parametrized
surfaces within discrete differential geometry (DDG) leads to quadrilateral
nets, often also called quadrilateral meshes.  
Such nets not only discretize smooth (point) sets, but also 
reflect the combinatorial structure of the parametrizations to be discretized. This generalizes to the case of nets of
arbitrary dimension with elementary 2-cells being quadrilaterals. These are omnipresent
in DDG as discretizations of various parametrized geometries.

While general quadrilateral nets discretize arbitrary parametrizations, the
discretization of distinguished parametrizations yields quadrilateral nets with
special geometric properties. The most fundamental example is the
discretization of conjugate parametrizations by quadrilateral nets with planar
faces. Discretizing more specific conjugate parametrizations yields
planar quadrilateral nets with additional properties. For instance, different
discretizations of curvature line parametrizations, which are
orthogonal conjugate parametrizations, have led to the notions of
circular and conical nets \cite{Bobenko:1999:DiscreteConformalMaps,
CieslinskiDoliwaSantini:1997:CircularNets,KonopelchenkoSchief:1998:3DIntegrableLattices,LiuPottmannWallnerYangWang:2006:ConicalNets}.
Circular and conical nets are unified within the
framework of principle contact element nets
\cite{BobenkoSuris:2007:OrganizingPrinciples,PottmannWallner:2007:FocalGeometry}.
The latter are not point maps anymore but take contact elements as
values at vertices of the domain.

Curvature line parametrizations of smooth surfaces exist (at least locally)
around non-umbilic points and are essentially unique (up to reparametrization
of the parameter lines).  In perfect analogy, one has unique asymptotic line
parametrizations around hyperbolic points of surfaces, that is, around points
of negative Gaussian curvature. Indeed, the description of curvature lines in
Lie geometry is equivalent to the description of asymptotic lines in Pl\"ucker
geometry, see, e.g., \cite{Klein:1926:GeometrieVorlesungen}.  In both
geometries a surface is described by its contact elements. In Lie geometry a
contact element consists of all spheres touching in a point, in Pl\"ucker
geometry it is composed of all lines in a plane through a point in that plane.
In both cases it is convenient to imagine a contact element as a 2-plane
containing a distinguished point. Curvature lines are characterized in Lie
geometry by the fact, that ``infinitesimally close contact elements along a
curvature line'' through a point~$p$ share a sphere, which is the principal
curvature sphere of this curvature line in $p$. Completely analogous, in Pl\"ucker
geometry two infinitesimally close contact elements along an asymptotic line
through a point $p$ share a line, which is the tangent to this asymptotic line
in $p$. However, as
asymptotic line parametrizations are not conjugate parametrizations, they are
not modelled by quadrilateral nets with planar faces. Instead, asymptotic line
parametrizations are properly discretized by quadrilateral nets with planar
vertex stars, that is, nets for which every vertex is coplanar with its next
neighbors.  We use the terminology of \cite{BobenkoSuris:2008:DDGBook}, calling
nets with planar quadrilaterals \emph{Q-nets} and nets with planar vertex stars
\emph{A-nets}. Q-nets and A-nets as discretizations of conjugate and asymptotic
line parametrizations were already introduced
in~\cite{Sauer:1937:ProjLinienGeometrie}. To be precise, the modern description
of A-nets in $\RP^3$ does not involve distinguished edges, i.e., line segments
connecting adjacent vertices. However, if we speak about an A-net as a quadrilateral
mesh, we have distinguished edges in mind. Therefore we use the term \emph{affine A-net}
to reflect the idea, that choosing an ideal plane at infity in $\RP^3$ determines
unique finite edges to be used to connect adjacent edges of an A-net.

Various aspects of smooth asymptotic line parametrizations have been
discretized using A-nets, often with a focus on
preserving the relations of the classical theory.  For example, the
discretization of surfaces of constant negative Gaussian curvature as special
A-nets, nowadays called K-surfaces, can be found in
\cite{Sauer:1950:Pseudosphaeren,Wunderlich:1951:K-Flaechen}. Much later, in
context of the connections between geometry and integrability, Bobenko and Pinkall
\cite{BobenkoPinkall:1996:DiscreteKandHirota} established the relation between
K-surfaces and the discrete sine-Gordon equation set down by Hirota
\cite{Hirota:1977:DiscreteSineGordon}.  For a
special instance of this relation, see, for example,
\cite{Hoffmann:1999:DiscreteAmsler} on discrete Amsler-surfaces, which are
special K-surfaces.
Discrete indefinite affine spheres
\cite{BobenkoSchief:1999:AffineSpheresIndefinite} are an example of the
discretization of a certain class of smooth A-nets within affine differential
geometry.  The discrete Lelieuvre representation of A-nets and the related
discrete Moutard equations are, for instance, treated in
\cite{KonopelchenkoPinkall:2000:ProjectiveLelieuvre,NimmoSchief:1997:MoutardSuperposition}.

An essential aspect of specific surface parametrizations is the corresponding
class of transformations together with the associated permutability theorems.
In the case of smooth A-nets, the
associated transformations are called Weingarten transformations. Two smooth
A-surfaces are said to be Weingarten transforms of each other if the line
connecting corresponding points is the intersection of the tangent planes to
the two surfaces at these points.  This relation carries over naturally to the
discrete setting
\cite{Doliwa:2001:ANetsPluecker,DoliwaNieszporskiSantini:2001:IntegrableReductionOfAnets}.

In the present work we show how to extend discrete affine A-nets to piecewise smooth
$C^1$-surfaces. The resulting surfaces, called \emph{hyperbolic nets}, are a
novel discretization of surfaces parametrized along asymptotic lines.  For our
approach, the description of A-nets in the projective model of Pl\"ucker
geometry is crucial \cite{Doliwa:2001:ANetsPluecker}. In this setting, A-nets
appear as discrete congruences of isotropic lines, that is, lines that are
completely contained in the Pl\"ucker quadric. In other words, A-nets in
$\RP^3$ are described in terms of contact elements, since isotropic lines in
the Pl\"ucker quadric represent contact elements in $\RP^3$. Indeed the
description of A-nets by their contact elements is a literal discretization of
the aforementioned characterizing property of asymptotic lines in Pl\"ucker
geometry.\footnote{ In the same spirit, the mentioned principle contact element
nets are a literal discretization of the characterizing property of curvature
lines in Lie geometry}  The extension of
A-nets to hyperbolic nets uses surface patches taken from hyperboloids, where a
hyperboloid in our sense is a doubly ruled quadric, i.e., a one-sheeted
hyperboloid or a hyperbolic paraboloid from the affine viewpoint.  The hyperboloid patches,
bounded by (straight) asymptotic lines of the supporting surfaces, are inserted
into the skew quadrilaterals of the A-nets such that the boundaries of the
patches align with the edges of the quadrilaterals.  Furthermore, two patches
sharing the common edge of edge-adjacent quadrilaterals have coinciding tangent
planes along this edge. This yields the $C^1$-property, if
the case that two adjacent patches form a cusp is excluded. The discretization of
asymptotic line parametrized surfaces as hyperbolic nets is very similar to the
discretization of curvature line parametrized surfaces by cyclidic nets,
introduced in \cite{BobenkoHuhnen-Venedey:2011:cyclidicNets}.
Weingarten transformations of hyperbolic nets have also been
investigated and will be the topic of a forthcoming paper.

There is only little work about surfaces composed of hyperboloid patches. 
Discrete affine minimal surfaces, which are special A-nets, are extended by
hyperboloid patches in~\cite{Craizer:2010:AffineMinimalSurfaces}. Starting with
the discrete Lelieuvre representation
of A-nets, Craizer et al.\ construct a bilinear extension of the considered
discrete surfaces.  This yields continuous surfaces composed of hyperboloid
patches that are adapted to the underlying discrete A-net. It turns out that, in the
very special situation of discrete affine minimal surfaces, this extension
indeed yields a piecewise smooth $C^1$-surface, i.e., a hyperbolic net in our
sense.  However, for general A-nets the extension yields only continuous, but
not differentiable surfaces.

It is natural to look for applications of hyperbolic nets in the context of
Computer Aided Geometric Design and architectural geometry. The latter is a
emerging field of applied mathematics, which provides
the architecture community with sophisticated geometric knowledge to tackle
diverse problems.
Focussing also on important aspects such as efficient manufacturing, providing intuitive
control of available degrees of freedom, and similar issues, many results in
DDG have already been applied in an architectural context.  Conversely,
problems in architecture have inspired the investigation of
particular geometric configurations.
We are sure that the present article will also find application in this field 
as there is growing interest in the extension of discrete support
structures to surfaces built from curved panels
\cite{BoEtAl:2011:CAS,PottmannEtAl:2008:FreeformSurfaceSingleCurvedPanels,LiuPottmannWallnerYangWang:2006:ConicalNets}.

\paragraph{Structure of the article and main results.}
Section~\ref{sec:proj_and_pluecker_geo} introduces our notations and the projective model of
Pl\"ucker line geometry. Discrete A-nets in $\RP^3$ are discussed in
Section~\ref{sec:preHypNets} and their description as discrete line congruences
contained in the Pl\"ucker quadric is given. Aiming at the extension of
A-nets to hyperbolic nets, we introduce pre-hyperbolic nets as an intermediate
step.  While hyperbolic nets contain a hyperboloid patch for each elementary
quadrilateral, pre-hyperbolic nets contain the whole supporting hyperboloid.
The main result of Section~\ref{sec:preHypNets} is that any simply connected
A-net with interior vertices of even degree can be extended to a pre-hyperbolic
net. In this case, there exists a 1-parameter family of pre-hyperbolic nets.
An extension is determined by the choice of an initial
hyperboloid associated with an arbitrary elementary quadrilateral. The initial
hyperboloid is propagated to all other quadrilaterals. The essential
step is to show the consistency of this propagation.  Since we are dealing with
simply connected topology only, there are no global closure conditions for the
evolution. Hence, we have to show consistency of the propagation around a
single inner vertex only. It turns out that a necessary and sufficient condition for
the consistency is even vertex degree.  In Section~\ref{sec:hyperbolic_nets}, we turn to
discrete affine A-nets, i.e., A-nets whose vertices are connected by finite
edges, and characterize those nets that allow for an extension by hyperboloid
patches to piecewise smooth $C^1$-surfaces.  The additional characterizing
property of affine A-nets that can be extended to hyperbolic nets, besides
inner vertices being of even vertex degree, turns out to be ``equi-twist'' of
quadrilateral strips. At the end of Section~\ref{sec:hyperbolic_nets}, we comment on the
computer implementation of our results. The appendix 
provides some calculations in coordinates.

\section{Projective and Pl\"ucker geometry}
\label{sec:proj_and_pluecker_geo}

There exists a beautiful description of discrete A-nets and hyperboloids in the
projective model of Pl\"ucker geometry.\footnote{The description of
hyperboloids in the projective model of Pl\"ucker geometry is completely analog
to the description of Dupin cyclides in the projective model of Lie geometry,
see Lie's famous line-sphere correspondence.}
Before discussing this description, we introduce the basic concepts and
notations for projective geometry and Pl\"ucker line geometry.

\subsection{Projective geometry and Plücker geometry background}
\label{subsec:background}

Classical Plücker geometry is the geometry of lines in the projective $3$-space
$\RP^3 = \P(\R^4)$.  In the projective model of Plücker geometry those lines
are represented as points on the 4-dimensional Plücker quadric $\plquad$, which
is embedded in the 5-dimensional projective space $\plspace = \P(\R^{3,3})$,
where $\R^{3,3}$ is $\R^6$ equipped with the inner product defining the Plücker
quadric.

For an introduction to projective geometry and quadrics in projective spaces,
see, e.g., \cite{Audin:2003:Geometry}.

\paragraph{Notation.} For the readers convenience we use different fonts to
distinguish the projective spaces $\RP^3$ and $\plspace$.  In $\RP^3$ we denote
points $\rp x \in \RP^3$, lines $\rp h \subset \RP^3$, and planes $\rp P
\subset \RP^3$, while objects in $\plspace$ are written in normal math font,
i.e., $h \in \plspace$ etc.  Homogeneous coordinates of both $\RP^3$ and
$\plspace$ are marked with a hat.  With respect to $\RP^3$ we write for example
$\rp x = [\rpc x] = \P(\R \rpc x)$ and for $\plspace$ the notation $h = [\pc h]
= \P(\R \pc h)$ is used.  More general, for a projective subspace $U$ the
associated linear subspace is denoted by~$\pc U$.

The inclusion minimal projective subspace containing projective subspaces $U_i =
\P(\pc U_i)$, $i=1, \dots, n$ is the \emph{projective span} of the $U_i$:
\begin{equation*}
\inc{U_1,\dots,U_n} := \P(\vspan{\pc U_1,\dots,\pc U_n}).
\end{equation*}

As mentioned before, the projective space $\plspace$ is equipped with the
Plücker quadric $\plquad$.  Polar subspaces with respect to $\plquad$ are
denoted by 
\begin{equation*} \pol{U_1,\dots,U_n} := \pol{\inc{U_1,\dots,U_n}} =
\P(\vspan{\pc U_1,\dots,\pc U_n}^\perp), \end{equation*} where $\perp$ denotes
the orthogonal complement with respect to the inner product on $\R^{3,3}$
defining the Plücker quadric.

\paragraph{Projective model of Plücker geometry.}
There exist a lot of books dealing with the projective model of Pl\"ucker
geometry. A classical reference for details on Plücker coordinates and the
Plücker inner product is~\cite{Klein:1926:GeometrieVorlesungen}. For a modern
treatment see, e.g., 
\cite{BobenkoSuris:2008:DDGBook,PottmannWallner:2001:ComputationalLineGeometry}.

The \emph{Plücker inner product} is a symmetric bilinear form
$\langle \cdot , \cdot \rangle : \R^6 \times \R^6 \to \R$ of signature $({+}{+}{+}{-}{-}{-})$.
We denote $\R^6$ equipped with this product by $\R^{3,3}$ and write the corresponding null-vectors in $\R^{3,3}$ as
\begin{equation*}
  \plcone := 
  \left\{ 
  \widehat v \in \R^{3,3} \mid 
  \langle \widehat v,\widehat v \rangle = 0
  \right\}.
\end{equation*}
Projectivization of $\plcone$ yields the \emph{Plücker quadric}
\begin{equation*}
  \plquad := \P(\plcone) \subset \textup{P}(\mathbb{R}^{3,3}) =: \plspace.
\end{equation*}
An element $h = [\pc h] \in \plspace$ is contained in $\plquad$ if and only if
$\pc h \in \R^{3,3}$ are homogeneous Pl\"ucker coordinates of a line
$\rp h \subset \RP^3$. Two lines $\rp h_1,\rp h_2$ in $\RP^3$
intersect if and only if their representatives $h_1, h_2 \in \plquad$ are polar
with respect to the Plücker quadric, i.e.,~$\langle \pc h_1,\pc h_2 \rangle =
0$. 

\begin{remark}
The projective model of Plücker geometry is often formulated in the language of
exterior algebra, see, for instance,
\cite{BobenkoSuris:2008:DDGBook,PottmannWallner:2001:ComputationalLineGeometry,Doliwa:2001:ANetsPluecker}.
One advantage of the exterior algebra formulation is the immediate presence of
Pl\"ucker line coordinates and easy calculation with them.
On the other hand, the formulation chosen for the present article
is more elementary and, most notably, points out the deep relation between
Pl\"ucker line geometry and Lie sphere geometry, cf.
\cite{BobenkoHuhnen-Venedey:2011:cyclidicNets}.\footnote{The projective model
of (3-dimensional) Lie geometry can be realized in $\P(\R^{4,2})$, i.e., using
an inner product of signature (4,2) instead one of signature (3,3) on $\R^6$.}
Benefiting from both descriptions, in Appendix~\ref{appendix}
we briefly translate between the two languages and use the exterior algebra
formulation to carry out some calculations. In the main text we are able to avoid
the introduction of explicit line coordinates in $\R^{3,3}$, since for our
considerations only the existence of an inner product with the
mentioned properties is relevant. In particular, we only need to describe polarity
with respect to the Pl\"ucker quadric and make general statements about the
signature of the Pl\"ucker product restricted to projective subspaces.
\end{remark}

Two lines $\rp h_1,\rp h_2 \subset \RP^3$ intersecting in a point $\rp x \in
\RP^3$ span a \emph{contact element}, which consists of all lines through $\rp
x$ lying in the plane spanned by~$\rp h_1$ and~$\rp h_2$.  In the projective
model of Plücker geometry this contact element is the projective line $L =
\inc{h_1,h_2}$, and since $\langle \pc h_1 , \pc h_2 \rangle = 0$ one has~$L
\subset \plquad$. Lines contained in a quadric are commonly called
\emph{isotropic lines} and we define
\begin{equation*}
  \plisolines = 
  \left\{ \text{Isotropic lines in } \plspace \right\} 
  \cong \left\{ \text{Contact elements of } \RP^3 \right\}.
\end{equation*}
It is convenient to think of a contact element in $\RP^3$ as a plane $\rp P$
with a distinguished point $\rp x$ (see Fig.~\ref{fig:contact_elements}) and
write $(\rp x,\rp P)$ for $L \in \plisolines$.

The \emph{signature of a projective subspace} $U = \P(\pc U) \subset \plspace$
is the signature of $\langle \cdot , \cdot \rangle$ restricted to $\pc U$. In
particular the signature of a contact element, i.e. an isotropic line, is $({0}{0})$.

\begin{figure}[t]
  \begin{center}
     \input{ ./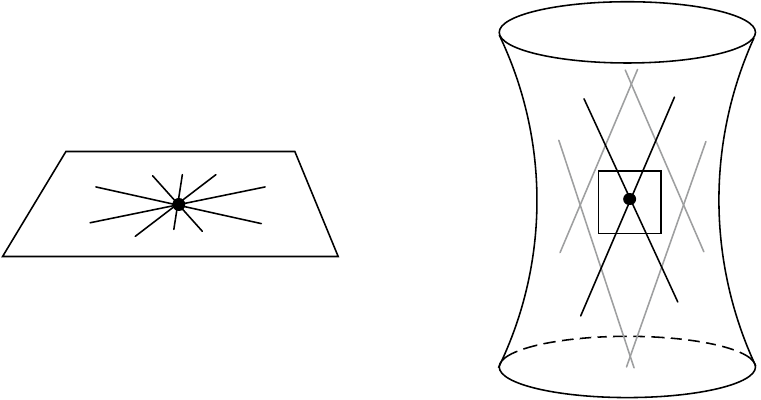_t } 
  \end{center}
  \caption{\emph{Left}: A contact element consists of all lines in a plane passing through a
  distinguished point. \emph{Right}: For a hyperboloid $\mathcal{H}$, two asymptotic lines
	of different families intersect in a unique point and span the corresponding
    contact element of $\mathcal{H}$ (the supporting plane being tangent to
    $\mathcal{H}$).}
  \label{fig:contact_elements}
\end{figure}

\begin{remark}
  Two skew lines in $\RP^3$ do not belong to a common contact element and hence
  define a line of signature $(+-)$ in $\plspace$.  In particular the three
  pairs of opposite edges of a non-planar tetrahedron in $\RP^3$ yield three
  lines of signature $(+-)$, which toghether span the whole $\plspace$.
  \label{rem:tetrahedron}
\end{remark}

\subsection{Hyperboloids in Plücker geometry}
\label{subsec:hyperboloids_pluecker_description}

A large part of this article is concerned with projective geometry. So we use
the term \emph{hyperboloid} for an arbitrary doubly ruled quadric in $\RP^3$.
In particular, a hyperboloid of one sheet and a hyperbolic paraboloid, being
different in affine geometry, are indistinguishable in projective geometry.
Referring to an affine chart of~$\RP^3$ one may say, that a hyperbolic
paraboloid is a hyperboloid of one sheet that is tangent to the ideal plane at
infinity. Looking at the intersection of a hyperboloid with the ideal plane,
an affine hyperboloid of one sheet intersects it in a
non-degenerate conic, while a hyperbolic paraboloid intersects it in two
intersecting lines, i.e., a certain degenerate conic (see,
e.g.,~\cite{Audin:2003:Geometry}).

An important statement on quadrics in $\RP^3$ is, that any three
mutually skew lines determine a unique hyperboloid.  From the previous
considerations it follows, that in a fixed affine chart this hyperboloid is a
hyperbolic paraboloid, if and only if the three intersection points of the
considered lines with the ideal plane are collinear.

The following description of hyperboloids in the projective model of Plücker
geometry can be found, for example, in \cite{Klein:1926:GeometrieVorlesungen}.

\begin{theorem}
  Hyperboloids in $\RP^3$ are in bijection with polar decompositions of
  $\plspace$ into two projective planes of signatures $({+}{+}{-})$ and
  $({+}{-}{-})$.
  \label{thm:hyperboloids_pluecker} 
\end{theorem}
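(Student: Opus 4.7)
The plan is to construct both directions of the bijection and verify they are mutually inverse. Starting with a hyperboloid $\mc H \subset \RP^3$ with rulings $A$ and $B$, send each line to its Pl\"ucker representative in $\plquad$ and set $\alpha := \inc{\{\pc\ell : \rp\ell \in A\}}$ and $\beta := \inc{\{\pc m : \rp m \in B\}}$.

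To show $\alpha$ is a 2-plane of the claimed signature, pick three distinct rulings $\rp\ell_1,\rp\ell_2,\rp\ell_3 \in A$. These are mutually skew, so by Remark \ref{rem:tetrahedron} the pairings $c_{12}:=\langle\pc\ell_1,\pc\ell_2\rangle$, $c_{13}:=\langle\pc\ell_1,\pc\ell_3\rangle$, $c_{23}:=\langle\pc\ell_2,\pc\ell_3\rangle$ are nonzero, while $\langle\pc\ell_i,\pc\ell_i\rangle=0$. The $3{\times}3$ Gram matrix then has zero diagonal and nonzero off-diagonals, hence determinant $2c_{12}c_{13}c_{23}\neq 0$; so $\pc\ell_1,\pc\ell_2,\pc\ell_3$ span a \emph{non-degenerate} 3-dimensional subspace $\pc\alpha_0$. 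Non-degeneracy together with the existence of a null direction forces the signature on $\pc\alpha_0$ to be either $({+}{+}{-})$ or $({+}{-}{-})$, the remaining possibilities $({+}{+}{+})$ and $({-}{-}{-})$ admitting no null vectors.

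In either signature $\plquad \cap \alpha_0$ is a real non-degenerate conic, giving a full 1-parameter family of projective null points; since $\min(p,q)=1$, no two non-proportional null vectors in $\pc\alpha_0$ can be orthogonal, so these points correspond to pairwise skew lines in $\RP^3$. This 1-parameter family of skew lines contains $\rp\ell_1,\rp\ell_2,\rp\ell_3$, so by the uniqueness of the hyperboloid through three mutually skew lines it must coincide with the entire ruling $A$ of $\mc H$. Hence $\alpha = \alpha_0$ is the asserted 2-plane; the symmetric argument produces $\beta$ as a 2-plane of complementary signature. Polarity comes for free: every line of $A$ meets every line of $B$, so $\pc\alpha \perp \pc\beta$, and a dimension count gives $\beta = \pol{\alpha}$.

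For the converse, given a polar decomposition $\plspace = \alpha \sqcup \pol{\alpha}$ with signatures $({+}{+}{-})$ and $({+}{-}{-})$, each plane meets $\plquad$ in a real non-degenerate conic parametrizing a 1-parameter family of mutually skew lines in $\RP^3$ (same isotropic-dimension argument); since $\alpha \perp \pol{\alpha}$, every line from one family meets every line from the other. Three lines from the $\alpha$-family determine a unique hyperboloid $\mc H$, and the $\pol{\alpha}$-family lies in its opposite ruling, being the family of all lines meeting the chosen three; re-running the forward argument shows the two conics exhaust the two rulings of $\mc H$. The two constructions are visibly mutually inverse. The main difficulty I anticipate is precisely the closure argument in the forward direction, namely that the entire ruling $A$ remains inside the 2-plane $\alpha_0$ determined by three of its members and does not escape to a larger subspace; pinning $\alpha$ down as a \emph{2-plane} relies critically on combining the signature classification with the uniqueness of the hyperboloid through three skew lines, and is the crux of the theorem.
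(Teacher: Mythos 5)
The paper does not actually prove this theorem: it is stated as a classical fact with a pointer to Klein, so there is no in-paper argument to measure you against. Your route --- span each regulus in $\plspace$, use the Gram determinant of three mutually skew rulings to get a non-degenerate $3$-dimensional coordinate space, rule out the definite signatures via the existence of null vectors, and obtain polarity of the two spans from the incidence of the two rulings --- is the standard classical argument, and almost all of it is sound: the determinant computation, the totally-isotropic-dimension argument for pairwise skewness, and the dimension count giving $\beta=\pol{\alpha}$ (whence the complementary signatures, since the ambient signature is $({3},{3})$) are all correct.

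The one step that is asserted rather than proved is precisely the one you yourself single out as the crux: that $\plquad\cap\alpha_0$ coincides with the ruling $A$. The sentence ``the family contains $\rp\ell_1,\rp\ell_2,\rp\ell_3$, so by uniqueness of the hyperboloid through three skew lines it must coincide with $A$'' is not yet an argument --- uniqueness of the hyperboloid only applies once you know that the conic family is a ruling of \emph{some} quadric surface, or at least that each of its members lies on $\mc H$, and neither has been established at that point. The gap closes as follows. A point $m\in\plquad\cap\alpha_0$ is polar to every point of $\pol{\alpha_0}=\pol{\ell_1,\ell_2,\ell_3}$; the isotropic points of that polar plane are exactly the transversals of $\rp\ell_1,\rp\ell_2,\rp\ell_3$, which form the opposite ruling of $\mc H$ and sweep it out (this is the same fact you invoke in the converse direction). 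Hence $\rp m$ meets every transversal; since the transversals are pairwise skew, these intersection points are pairwise distinct, so $\rp m$ meets the quadric $\mc H$ in infinitely many points and therefore lies on it, and it lies in the ruling $A$ because it is skew to $\rp\ell_1$ (two distinct null points of $\pc\alpha_0$ cannot be polar). The reverse inclusion $A\subseteq\plquad\cap\alpha_0$ follows because every line of $A$ meets every transversal, so its representative is polar to all of $\pol{\alpha_0}$ and hence lies in $\alpha_0$. With this inserted, the identification $\alpha=\alpha_0$ is justified and the rest of your proof, including the converse, goes through.
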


Usually we denote the two (disjoint) planes corresponding to a hyperboloid by
$P^{(1)}$ and $P^{(2)}$.  The conic sections $\plquad \cap P^{(1)}$ and
$\plquad \cap P^{(2)}$ describe of the two families of
rulings of a hyperboloid. Two lines $h^{(1)} \in P^{(1)}$ and $h^{(2)} \in
P^{(2)}$, one of each family, define a unique contact element of the
hyperboloid~(cf.~Figs.~\ref{fig:contact_elements} and \ref{fig:hyperbolic_planes}). 

\begin{definition}[Hyperbolic family of lines / regulus / ruling]
  Denote
  \[ 
    \hypplanes = 
    \left\{ 
    \text{Planes in } \plspace \text{ of signature }
    ({+}{+}{-}) \text{ or }({+}{-}{-})
    \right\}.
  \]
  A 1-parameter family of lines corresponding to a conic section $P \cap \plquad$
  with $P \in \hypplanes$ is called a \emph{hyperbolic family of lines} (cf.\
  Fig.~\ref{fig:hyperbolic_planes}).  We also use the term \emph{regulus} for a
  hyperbolic family of lines.
	A line of a regulus is called a \emph{ruling} of the corresponding hyperboloid.
  \label{def:hyperbolic_planes}
\end{definition}

\begin{figure}[t]
  \begin{center}
     \input{ ./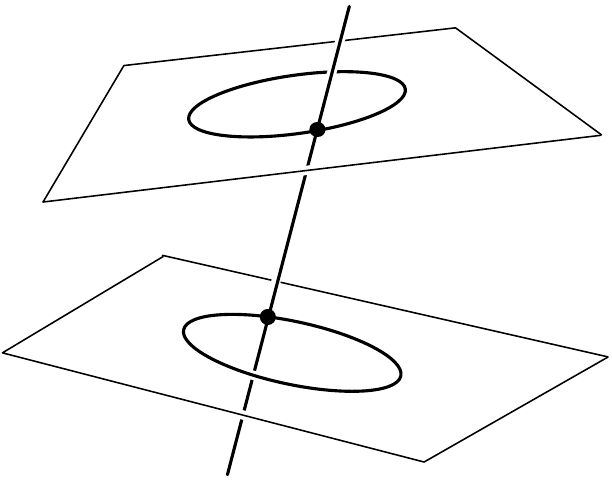_t } 
		\caption{Polar planes $P^{(1)},P^{(2)} \in \hypplanes$ describe a
		hyperboloid $\mathcal{H} \subset \RP^3$: Each plane intersects the Plücker
		quadric $\plquad$ in a non-degenerate conic $\mathcal{H}^{(i)} = \plquad
		\cap P^{(i)}$, and each conic corresponds to one regulus of
		$\mathcal{H}$.  Each point~$\rp x$ of the hyperboloid is obtained as
		intersection point of lines $\rp h_1$ and $\rp h_2$ with~$h^{(i)} \in
		\mathcal{H}^{(i)}$.  The isotropic line $\inc{h^{(1)},h^{(2)}} \subset
		\plquad$ is the corresponding contact element of $\mathcal{H}$ at $\rp x$.}
    \label{fig:hyperbolic_planes}
  \end{center}
\end{figure}

\begin{remark}
  The complementary signatures $({+}{+}{-})$ and $({+}{-}{-})$ in
  Definition~\ref{def:hyperbolic_planes} reflect the two possible orientations
  of hyperbolic families of lines in $\RP^3$,
  cf.~\cite{Klein:1926:GeometrieVorlesungen}. In particular the two reguli of a
  hyperboloid in $\RP^3$ are of opposite orientation, according to the
  complementary signatures of the 2-planes introduced in
  Theorem~\ref{thm:hyperboloids_pluecker}. The different orientations of reguli
  will become important in the context of hyperbolic nets, so a more detailed
	discussion follows in Section~\ref{sec:hyperbolic_nets}.
  \label{rem:orientation_of_reguli}
\end{remark}

\section{Pre-hyperbolic nets}
\label{sec:preHypNets}

In this section, we start out with two equivalent definitions of discrete
A-nets. Then we show how to attach hyperboloids to the elementary
quadrilaterals of a discrete A-net, such that hyperboloids of edge-adjacent
quadrilaterals are tangent along a common asymptotic line that is determined by
the A-net. In Theorem~\ref{thm:nets_of_hyperboloids_from_anets} we show that
there exists a 1-parameter family of such adapted hyperboloids for any discrete
A-net with all inner vertices of even degree. Such A-nets with adapted
hyperboloids, called pre-hyperbolic nets, are the first step towards hyperbolic
nets to be introduced in Section~\ref{sec:hyperbolic_nets}.

\subsection{Discrete A-nets}
\label{subsec:anets}
Two dimensional discrete A-nets are a
discretization of smooth surfaces parametrized along asymptotic lines, going
back to Sauer \cite{Sauer:1937:ProjLinienGeometrie}.  We use two equivalent
definitions of A-nets as maps on a quad-graph, one in terms of the
vertices and another one in terms of contact elements (cf.
\cite{Doliwa:2001:ANetsPluecker,BobenkoSuris:2008:DDGBook}).

\begin{definition}[Quad-graph / vertex star]
A \emph{quad-graph} is a strongly regular polytopal cell decomposition of a
surface, such that all faces are quadrilaterals. We write $\mathcal{D}=\left( V,E,F
\right)$, where $V$ is the set of vertices (0-cells), $E$ is the set of edges
(1-cells), and $F$ is the set of faces (2-cells) of the quad-graph.

Let $v \in V$ be a vertex of $\mathcal{D}$. The \emph{vertex star} of $v$ is
the set of all vertices that are adjacent to $v$, including $v$ itself.
\label{def:quadgraph}
\end{definition}

\begin{definition}[Discrete A-net] Let $\mathcal{D}$ be a quad-graph with
  vertices~$V$.
  \begin{itemize}[$\bullet$]
  \item \emph{Vertex description of A-nets:}
    Let $\rp f : V \to \RP^3$. Then $\rp f$ is a discrete A-net if all
    vertex stars are planar, cf.\ Fig.~\ref{fig:planar_star}. 
  \item \emph{Contact element description of A-nets:}
    Let $L : V \to \plisolines \subset \plspace$. Then~$L$ defines the contact elements of a
    discrete A-net if adjacent isotropic lines intersect. In other words,~$L$ is a
    discrete congruence of (isotropic) lines in $\plspace$.
  \end{itemize}
  \label{def:anet}
\end{definition}
\begin{figure}[t]
  \begin{center}
    \includegraphics[scale=.16]{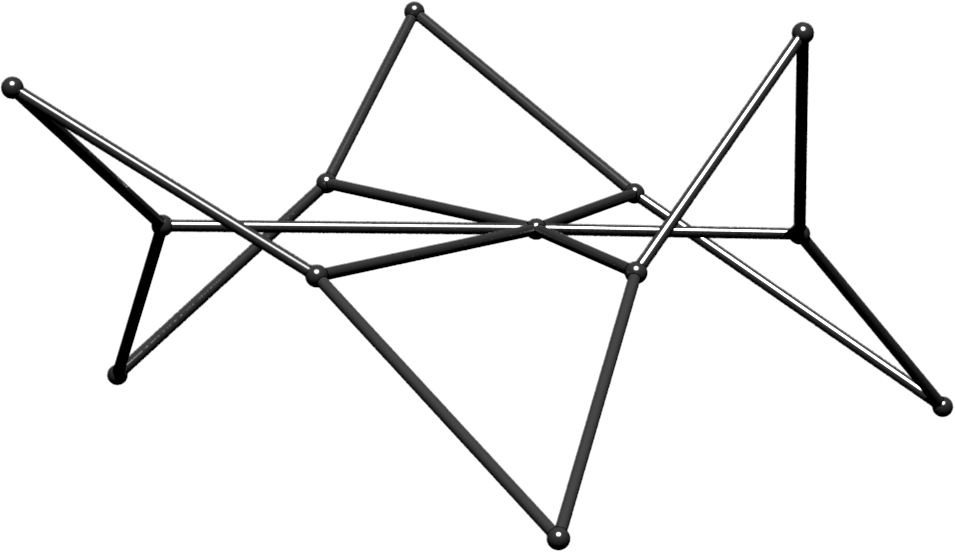}
  \end{center}
  \caption{Edges adjacent to a vertex of a discrete A-net lie in a 2-plane.}
  \label{fig:planar_star}
\end{figure}

\paragraph{Equivalence of vertex and contact element description of A-nets.}
Recall, that contact elements in $\RP^3$ can be written as $(\rp x, \rp P)$ (see
Section~\ref{subsec:background}).  The two above descriptions are related as
follows: Let $v$ and~$w$ be two adjacent vertices of the quad
graph~$\mathcal{D}$, and~$\rp P(v)$ and~$\rp P(w)$ be the planes containing the
respective vertices $\rp f(v)$ and $\rp f(w)$ of the A-net.  Vertices and
planes are encoded by the contact elements $L(v) = (\rp f(v),\rp P(v))$ and
\mbox{$L(w) = (\rp f(w),\rp P(w))$} in $\plisolines$.  The line $\rp l$ supporting the
edge $(\rp f(v),\rp f(w))$ in~$\RP^3$ is exactly the intersection $l = L(v)
\cap L(w) \in \plquad$, cf.  Fig.~\ref{fig:contact_elements_anet}. We
associate~$l$ to the corresponding edge $(v,w)$ of~$\mathcal{D}$. Those lines
common to adjacent contact elements are called \emph{discrete asymptotic lines}
of the A-net. Representatives of the discrete asymptotic lines in $\plspace$
constitute the \emph{focal net} of the discrete line congruence $L$.
In the book of Bobenko and Suris~\cite{BobenkoSuris:2008:DDGBook} the definition
of focal net is given for the regular $\Z^2$-grid. Hence there exist
distinguished coordinate directions and they define multiple focal nets for
different coordinate directions.

\begin{figure}[tb]
\begin{center}
 \input{ ./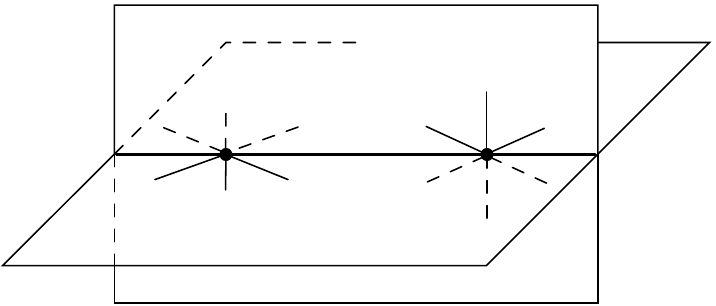_t } 
\end{center}
\caption{Discrete asymptotic lines of an A-net are the lines shared by adjacent contact elements.}
\label{fig:contact_elements_anet}
\end{figure}

\paragraph{Affine A-nets.}
Note that, strictly speaking, in the projective setting there are no
distinguished edges, i.e., line segments connecting adjacent vertices of an
A-net. However, considering an A-net in an affine part $\R^3 \subset \RP^3$,
there is a distinguished plane at infinity, and it is natural to connect the
vertices of the A-net with finite edges as in Fig.~\ref{fig:planar_star}.  We
call such A-nets equipped with finite edges \emph{affine A-nets}.

\paragraph{Genericity assumption.}
We assume that the A-nets under consideration are generic in the following sense.
Discrete asymptotic lines are skew, except if they are adjacent to a common
vertex.  In terms of discrete congruences of isotropic lines in $\plquad$, this
means that two vertices of the focal net span an isotropic line if and only if
they are associated with edges of $\mathcal{D}$ that are adjacent to a common
vertex.  Moreover, we assume that discrete asymptotic lines meeting at a vertex
are distinct. This implies that the vertices of the focal net associated with
an elementary quadrilateral of $\mathcal{D}$, i.e., representatives in
$\plspace$ of the four discrete asymptotic lines of an elementary quadrilateral,
span a 3-space in $\plspace$ (see Fig.~\ref{fig:elem_quad_line_congruence}).

Note that if one elementary quadrilateral of an A-net in $\RP^3$ is contained
in a 2-plane, the whole net has to be contained in this plane.  In
particular the A-net is not generic.

\paragraph{Notation for discrete maps.}
A special instance of a quad-graph is (a piece of) $\Z^2$.  In this case it is
convenient to represent shifts in lattice directions by lower indices. For
$z = (m,n) \in \Z^2$ and a map $\phi$ on $\Z^2$ we write
\begin{equation*}
\phi_1(z) := \phi(m+1,n), \
\phi_{11}(z) := \phi(m+2,n), \
\phi_2(z) := \phi(m,n+1), \quad \text{etc.}
\end{equation*}
However, also for a general quad-graph $\mathcal{D}$ we use indices as
shift operators when talking about local properties, if the quad-graph
can be identified locally with $\Z^2$.

Usually we omit the argument for discrete maps and write
\begin{equation*}
  \phi = \phi (z), \quad \phi_1 = \phi_1 (z) \quad \text{etc.}
\end{equation*}

The structure of A-nets as isotropic line congruences and the notation we use
is shown in Fig.~\ref{fig:elem_quad_line_congruence}.  Upper indices are used
to denote the focal net. The lower index reflects the respective net direction. 

\begin{figure}[t]
\begin{center}
 \input{ ./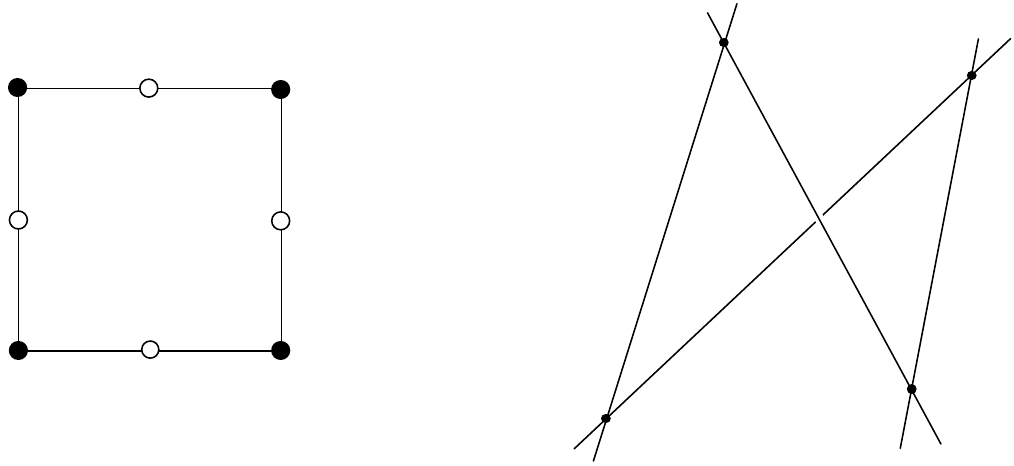_t } 
\end{center}
\caption{Notation for a discrete congruence of lines $L$ and the corresponding focal net $h$,
combinatorially and geometrically.}
\label{fig:elem_quad_line_congruence}
\end{figure}

\subsection{Pre-hyperbolic nets}
\label{subsec:prehypnets}

Similar to the previous section, we give two descriptions of pre-hyperbolic nets,
one in~$\RP^3$ and another one in~$\plspace$ in terms of P\"ucker
geometry. Then we show that an elementary quadrilateral of an A-net admits a
1-parameter family of adapted hyperboloids. Finally, we discuss the extension
of A-nets to pre-hyperbolic nets and show in
Theorem~\ref{thm:nets_of_hyperboloids_from_anets}, that a simply connected
A-net allows for such an extension, if and only if all interior vertices are of
even degree.  In this case there is a 1-parameter family of adapted
pre-hyperbolic nets.  As the name suggests, pre-hyperbolic nets are a first
step towards hyperbolic nets. While hyperbolic nets contain hyperboloid surface
patches associated with elementary quadrilaterals, for a pre-hyperbolic net the
whole supporting hyperboloids are considered.  The motivation to introduce
pre-hyperbolic nets is two-fold. On the one hand, they provide a convenient way
to prove the consistency of the extension of A-nets to hyperbolic nets. 
On the other hand they are interesting structures in their own right,
especially in their Pl\"ucker geometric description.\footnote{In exactly the
same spirit one could introduce pre-cyclidic nets  in
\cite{BobenkoHuhnen-Venedey:2011:cyclidicNets} and use them to prove the
consistency of the propagation of Dupin cyclides}  
As we will see in Section~\ref{sec:hyperbolic_nets}, a hyperbolic net can be seen as a the restriction of a
pre-hyperbolic net.  

A generic discrete A-net with hyperboloids associated with elementary
quadrilaterals is a \emph{pre-hyperbolic net}, if
\begin{enumerate}[i)]
\item the four discrete asymptotic lines associated with edges of an elementary
  quadrilateral of the A-net are asymptotic lines of the corresponding hyperboloid, and
\item hyperboloids belonging to edge-adjacent quadrilaterals have coinciding
  contact elements along the common (discrete) asymptotic line $\rp h$, i.e., the
  hyperboloids are tangent along $\rp h$.
\end{enumerate}

\noindent We translate i) and ii) to the projective model of Pl\"ucker geometry:
\begin{enumerate}[i)]
\item The four lines $\rp h^{(1)}, \rp h^{(1)}_2$ and $\rp h^{(2)},\rp
  h^{(2)}_1$ associated with the edges of a quadrilateral of the A-net correspond
  to four points $h^{(1)}, h^{(1)}_2$ and $h^{(2)}, h^{(2)}_1$ on the Pl\"ucker
  quadric $\plquad$, cf. Fig.~\ref{fig:elem_quad_line_congruence}.  The
  hyperboloid~$\mathcal{H} \subset \RP^{3}$ associated with the quadrilateral
  corresponds to two polar planes $P^{(1)}$ and~$P^{(2)}$ in~$\plspace$ (cf.\
  Theorem~\ref{thm:hyperboloids_pluecker}). Hence the edges of the
  quadrilateral are segments of asymptotic lines of the
  hyperboloid~$\mathcal{H}$ if the points $h^{(1)}, h^{(1)}_2$ resp.\ $h^{(2)},
  h^{(2)}_1$ lie in the plane $P^{(1)}$ resp.\ $P^{(2)}$.
\item Let $\mathcal{H}$ and $\mathcal{H}_i$ be the two hyperboloids associated
	with edge-adjacent quadrilaterals with associated planes $P^{(1)},P^{(2)}$
	and $P^{(1)}_i,P^{(2)}_i$, respectively. Then $\mathcal{H}$ and
	$\mathcal{H}_i$ are tangent at the common edge $h^{(j)}_i,\ i \ne j$, (see
	Fig.~\ref{fig:edge_adjacent_line_congruence_quads}) if all contact elements
	of the two hyperboloids along this edge coincide. So let $l^{(i)} \in
	P^{(i)}\cap \plquad$ resp.\ $l^{(i)}_i \in P^{(i)}_i \cap \plquad$ be two
	asymtotic lines of $\mathcal{H}$ resp.\ $\mathcal{H}_i$ through one point on
	the common edge.  Then the contact elements $\inc{h^{(j)}_i,l^{(i)}}$ and
	$\inc{h^{(j)}_i,l^{(i)}_i}$ coincide,
	cf.~Fig.~\ref{fig:contact_element_cone}. Hence $\mathcal{H}$ and
	$\mathcal{H}_i$ are tangent along the common edge if $\inc{h^{(j)}_i,
	P^{(i)}} = \inc{h^{(j)}_i,P^{(i)}_i}$
\end{enumerate}

\begin{figure}[t]
\begin{center}
 \input{ ./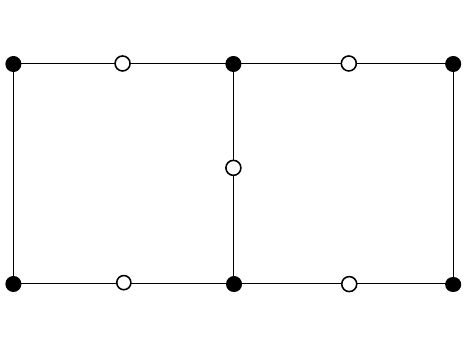_t } 
\end{center}
\caption{Lines and contact elements of edge-adjacent quadrilaterals of an A-net.}
\label{fig:edge_adjacent_line_congruence_quads}
\end{figure}

\begin{figure}[ht]
\begin{center}
 \input{ ./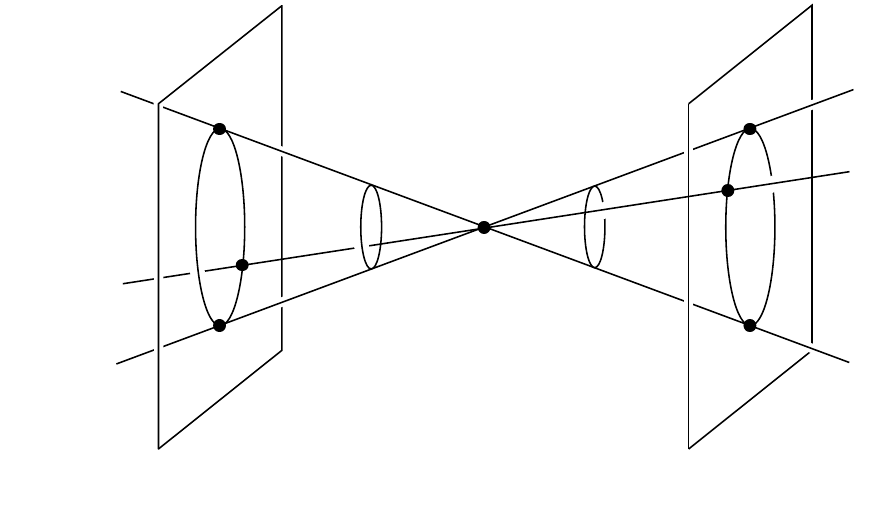_t } 
\end{center}
\caption{Planes $P^{(i)}$ and $P^{(i)}_i$ of adjacent hyperboloids
  $\mathcal{H}$ and~$\mathcal{H}_i$ that are tangent along the
  common asymptotic line $h_i^{(j)}$ ($i \ne j$). }
\label{fig:contact_element_cone}
\end{figure}

We use this insight for the definition of pre-hyperbolic nets in terms of
Plücker geometry.

\begin{definition}[Pre-hyperbolic net]
	A generic discrete congruence of isotropic lines~$L$ in $\plquad$ defined on
	a quad-graph $\mathcal{D}$ with polar $2$-planes $P^{(1)}, P^{(2)} \in
	\hypplanes$ associated with elementary quadrilaterals is a pre-hyperbolic net, if
  \begin{enumerate}[i)]
  \item $h^{(1)},h^{(1)}_2 \in P^{(1)}$ and $h^{(2)},h^{(2)}_1 \in P^{(2)}$, and
	\item for hyperboloids associated with adjacent quadrilaterals $Q,Q_i$ of $\mathcal{D}$ as in
	Fig.~\ref{fig:edge_adjacent_line_congruence_quads},	the subspaces
	$\inc{h^{(j)}_i, P^{(i)}}$ and $\inc{h^{(j)}_i,P_i^{(i)}}$ of $\plspace$
	coincide for $i \ne j$, cf. Fig.~\ref{fig:contact_element_cone}.
  \end{enumerate}
	\label{def:pre_hyperbolic_net}
\end{definition}

\begin{remark}
Property ii) in Definition~\ref{def:pre_hyperbolic_net} implies that
corresponding reguli of adjacent hyperboloids have the same orientation. 
In Fig.~\ref{fig:contact_element_cone}, if the signature of $P^{(i)}$
is $({+}{+}{-})$, then the 3-space $\inc{h_i^{(j)},P^{(i)}}$ is of
(degenerate) signature $({+}{+}{-}{0})$.  Since $P_i^{(i)}$ does not contain
$h_i^{(j)}$, the plane $P_i^{(i)}$ is also of signature $({+}{+}{-})$.  The
fact that the two reguli of a single hyperboloid are of complementary signature
in turn implies, that $P^{(j)}$ and $P_i^{(j)}$ also have the same signature.
\label{rem:propagation_preserves_orientation}
\end{remark}

\paragraph{Extending discrete A-nets to pre-hyperbolic nets.}
For an elementary quadrilateral $Q=(L,L_1,L_{12},L_2)$ of an A-net we define
\begin{eqnarray*}
  \inc{Q} &:=& \inc{L,L_1,L_{12},L_2},\\
  H[Q] &:=& \pol{L,L_1,L_{12},L_2} = \pol{Q}.
\end{eqnarray*}
Usually we write $H,H_i$ for $H[Q], H[Q_i]$, etc.

\begin{proposition}[]
Let $Q$ be an elementary quadrilateral of a generic A-net that is defined on a
quad-graph $\mathcal{D}$. Then:
\begin{enumerate}[i)]
\item The space $\inc{Q} \subset \plspace$ is 3-dimensional and of signature
  $({+}{+}{-}{-})$.  The space $H[Q] \subset \plspace$ is a projective line of signature
  $(+-)$.
\item The lines $ H = \pol{Q}$ associated with the elementary quadrilaterals $Q$
  of the A-net constitute a discrete line congruence on the dual graph of $\mathcal{D}$.
\end{enumerate}
\label{prop:properties_of_non_degenerate_anet_quads}
\end{proposition}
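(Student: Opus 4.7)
The plan is to pull everything back to the four focal-net Plücker points $h^{(1)}, h^{(2)}, h^{(1)}_2, h^{(2)}_1$ carrying the edges of the skew quadrilateral $Q$, and then read off both parts from a $4\times 4$ Gram matrix and the Grassmann formula. For part i), each contact element of $Q$ is the projective span of its two incident focal-net points, for example $L = \inc{h^{(1)}, h^{(2)}}$ and $L_1 = \inc{h^{(1)}, h^{(2)}_1}$, so $\inc{Q} = \inc{h^{(1)}, h^{(2)}, h^{(1)}_2, h^{(2)}_1}$. The genericity assumption recorded in Section~\ref{subsec:anets} states exactly that these four points span a $3$-space in $\plspace$, giving $\dim \inc{Q} = 3$ at once.

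To find the signature I would write out the Gram matrix of $\hat h^{(1)}, \hat h^{(1)}_2, \hat h^{(2)}, \hat h^{(2)}_1$, in this order, with respect to the Plücker product. All diagonal entries vanish by isotropy. An off-diagonal entry vanishes precisely when the two corresponding lines in $\RP^3$ share a vertex of $Q$, which happens for all four pairs of adjacent edges. The two remaining entries correspond to the pairs of opposite edges $(\rp h^{(1)}, \rp h^{(1)}_2)$ and $(\rp h^{(2)}, \rp h^{(2)}_1)$; these are skew in $\RP^3$ by genericity and therefore pair to something nonzero. So the Gram matrix splits into two $2\times 2$ anti-diagonal blocks $\bigl(\begin{smallmatrix} 0 & a \\ a & 0\end{smallmatrix}\bigr)$ with $a \neq 0$, each of signature $({+}{-})$. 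Hence $\inc{Q}$ is nondegenerate with signature $({+}{+}{-}{-})$, and its polar $H[Q] = \pol{Q}$ is automatically a projective line of complementary signature $({+}{-})$ inside $({+}{+}{+}{-}{-}{-})$.

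For part ii), I must verify that $H[Q] \cap H[Q_i] \neq \emptyset$ whenever two elementary quadrilaterals $Q, Q_i$ of $\mathcal{D}$ share an edge. Label the shared edge so that it carries the contact elements $L_1$ and $L_{12}$; these isotropic lines lie in both $\inc{Q}$ and $\inc{Q_i}$ by construction, and they meet at the Plücker point $h^{(2)}_1$ representing the common edge itself. Consequently the projective plane $\inc{L_1, L_{12}}$, whose linear span is $3$-dimensional, is contained in $\inc{Q} \cap \inc{Q_i}$. Grassmann then gives $\dim_{\mathrm{lin}}(\hat{\inc{Q}} + \hat{\inc{Q_i}}) \leq 4+4-3 = 5$, so $\inc{Q, Q_i}$ has projective dimension at most $4$; passing to polars, $H[Q] \cap H[Q_i] = \pol{\inc{Q,Q_i}}$ has linear dimension at least one and therefore contains a point. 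The coincidence $H[Q] = H[Q_i]$ would force $\inc{Q_i} \subset \inc{Q}$, collapsing the two focal-net points of $Q_i$ that do not already belong to $\inc{Q}$ into this $3$-space, which genericity forbids; so $H[Q]$ and $H[Q_i]$ meet in a single point, establishing the line-congruence property. The whole proof is routine linear algebra once the Plücker points and their incidences are laid out; the only delicate spot is ensuring that the $2\times 2$ Gram blocks in part~i) are really $({+}{-})$ rather than degenerate, and this is precisely what the genericity hypothesis guarantees, so I do not foresee a substantial obstacle.
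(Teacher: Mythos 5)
Your argument is correct, and for part i) it takes a mildly different route than the paper. The paper invokes Remark~\ref{rem:tetrahedron}: viewing the four vertices of $Q$ as a non-planar tetrahedron, the two pairs of opposite quadrilateral edges and the pair of diagonals $g_1,g_2$ each span a line of signature $({+}{-})$, and --- since each diagonal passes through two vertices and lies in the tangent planes of the other two, hence is polar to all four contact elements --- one gets $H[Q]=\inc{g_1,g_2}$ of signature $({+}{-})$ directly, with $\inc{Q}$ receiving the complementary signature $({+}{+}{-}{-})$. You instead compute the Gram matrix of the four edge representatives $\pc h^{(1)},\pc h^{(1)}_2,\pc h^{(2)},\pc h^{(2)}_1$ and read off the signature of $\inc{Q}$ first, deducing that of $H[Q]$ by polarity. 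Both are sound; your version is more self-contained (the nondegeneracy of your block Gram matrix even reproves the $3$-dimensionality without appealing to the genericity clause), while the paper's version buys the explicit identification of $H[Q]$ with the span of the two diagonals of the skew quadrilateral, a fact the paper reuses later (Remark~\ref{rem:specialQ} and the implementation section). For part ii), your Grassmann count on $\inc{Q,Q_i}$ is precisely the polar dual of the paper's one-line argument that $H$ and $H_i$ both lie in the projective plane $\pol{L_i,L_{ij}}$ and therefore meet. One small caveat: your concluding step ruling out $H=H_i$ leans on ``genericity'' in a way the stated genericity assumptions do not literally cover (they forbid unwanted incidences among focal-net points and degenerate quadrilaterals, not the containment $\inc{Q_i}\subset\inc{Q}$); but this step is superfluous, since the congruence property as used in the paper only requires adjacent lines to intersect.
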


\begin{proof}
  \emph{i)} An elementary quadrilateral of a generic A-net is a
  non-planar quadrilateral in $\RP^3$. According to Remark~\ref{rem:tetrahedron},
  the pairs $h^{(2)},h^{(2)}_1$ and $h^{(1)},h^{(1)}_2$, as well as the
  diagonals $g_1,g_2$ each span lines of signature~$(+-)$ in $\plspace$. So
  $H[Q] = \inc{g_1,g_2}$ has signature $(+-)$ and thus its polar $\inc{Q}$
  has signature $({+}{+}{-}{-})$.

  \emph{ii)} It remains to show that for edge-adjacent quadrilaterals $Q$ and $Q_i$ the
  lines $H$ and $H_i$ intersect.  This follows from the observation, that $L_i$
	and $L_{ij}$, $i \ne j$, are both contained in each of the projective spaces
	$\inc{Q}$ and $\inc{Q_i}$, and hence $H, H_i \subset \pol{L_i,L_{ij}}$, cf.
  Fig.~\ref{fig:edge_adjacent_line_congruence_quads}.  Since $\pol{L_i,L_{ij}}$
  is a projective plane, the lines $H$ and $H_i$ intersect.
\end{proof}

Each of the elementary quadrilaterals admits a 1-parameter family of adapted
hyperboloids as indicated in Fig.~\ref{fig:family}. These hyperboloid can be described 
in terms of Pl\"ucker geometry as explained in the following Lemma~\ref{lem:hyperboloids_from_anets}.

\begin{lemma}
  For an elementary quadrilateral $Q$ of a generic A-net there is a
  1-parameter family of hyperboloids containing all four discrete asymptotic
  lines.  Each such hyperboloid is uniquely determined by the choice of two
  labeled points $q^{(1)},q^{(2)} \in H[Q]$ that are polar (see Fig.~\ref{fig:family} left).
\label{lem:hyperboloids_from_anets}
\end{lemma}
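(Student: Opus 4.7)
The plan is to apply Theorem~\ref{thm:hyperboloids_pluecker}: a hyperboloid $\mathcal{H}\subset\RP^3$ whose rulings include $\rp h^{(1)},\rp h^{(1)}_2$ in one regulus and $\rp h^{(2)},\rp h^{(2)}_1$ in the other corresponds in $\plspace$ to a polar pair of planes $P^{(1)},P^{(2)}\in\hypplanes$ with $h^{(1)},h^{(1)}_2\in P^{(1)}$ and $h^{(2)},h^{(2)}_1\in P^{(2)}$. I will show that such pairs are in bijection with labeled pairs $(q^{(1)},q^{(2)})$ of mutually polar points on the line $H[Q]$.

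First I record the polarities inherited from incidence in $\RP^3$: each asymptotic line of type $(1)$ meets each asymptotic line of type $(2)$ at a vertex of $Q$ (for instance $\rp h^{(1)}\cap\rp h^{(2)}_1=\rp f_1$), and therefore $\langle\pc h^{(1)}_k,\pc h^{(2)}_m\rangle=0$ for all relevant shifts $k,m$. Consequently $\inc{h^{(1)},h^{(1)}_2}\subset\pol{h^{(2)},h^{(2)}_1}$, and the polarity condition $\inc{h^{(2)},h^{(2)}_1}\subset\pol{P^{(1)}}$ becomes $P^{(1)}\subset\pol{h^{(2)},h^{(2)}_1}$. Under the genericity assumption $\rp h^{(1)},\rp h^{(1)}_2$ are skew, so by Remark~\ref{rem:tetrahedron} their span $\inc{h^{(1)},h^{(1)}_2}$ is a line; since $\pol{h^{(2)},h^{(2)}_1}$ is a $3$-dimensional projective space containing it, the admissible planes $P^{(1)}$ sandwiched between these two spaces form a $\PP^1$-family.

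To parametrize this family canonically by $H[Q]$, I use Proposition~\ref{prop:properties_of_non_degenerate_anet_quads}: since $\inc{Q}$ is non-degenerate of signature $({+}{+}{-}{-})$, its polar line $H[Q]$ is disjoint from $\inc{Q}$, hence from $\inc{h^{(1)},h^{(1)}_2}$; moreover $H[Q]\subset\pol{h^{(2)},h^{(2)}_1}$. For every $q^{(1)}\in H[Q]$ the plane
\[
 P^{(1)}:=\inc{h^{(1)},h^{(1)}_2,q^{(1)}}
\]
is then admissible, and the assignment $P^{(1)}\mapsto P^{(1)}\cap H[Q]$ is inverse: a two-point intersection would force $H[Q]\subset P^{(1)}$, making the lines $H[Q]$ and $\inc{h^{(1)},h^{(1)}_2}$ meet inside the $2$-plane $P^{(1)}$, contrary to their disjointness. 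The partner point $q^{(2)}=P^{(2)}\cap H[Q]$ is characterized by $q^{(2)}\perp q^{(1)}$, since every point of $H[Q]$ is automatically polar to $\inc{h^{(1)},h^{(1)}_2}$; uniqueness follows from the non-degenerate signature $(+-)$ of $H[Q]$.

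The main (if routine) obstacle is the signature bookkeeping. Since $q^{(1)}\in H[Q]=\pol{\inc{Q}}$ is orthogonal to the sub-line $\inc{h^{(1)},h^{(1)}_2}$ of $\inc{Q}$, the plane $P^{(1)}$ is the orthogonal sum of this $(+-)$-line and $\mathrm{span}(q^{(1)})$; hence $P^{(1)}\in\hypplanes$ has signature $({+}{+}{-})$ or $({+}{-}{-})$ according to the sign of $\langle\pc q^{(1)},\pc q^{(1)}\rangle$. By the indefinite $(+-)$ signature on $H[Q]$ the polar point $q^{(2)}$ carries the opposite sign, so $P^{(2)}=\pol{P^{(1)}}$ automatically has the complementary hyperbolic signature demanded by Theorem~\ref{thm:hyperboloids_pluecker}. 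The only excluded configurations are the two isotropic points of $H[Q]\cap\plquad$, where $q^{(1)}=q^{(2)}$ and $P^{(1)}$ degenerates; the valid choices thus form the asserted $1$-parameter family of hyperboloids.
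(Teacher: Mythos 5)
Your proof is correct and takes essentially the same route as the paper's: both parametrize the adapted hyperboloids by pairs of polar points on $H[Q]$ via $P^{(i)}=\inc{h^{(i)},h^{(i)}_j,q^{(i)}}$ and invoke Theorem~\ref{thm:hyperboloids_pluecker}. You merely make explicit the bijectivity and signature bookkeeping that the paper's terser argument leaves implicit.
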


\begin{proof}
For each non-isotropic point $q \in H = H[Q]$ there is a unique polar point on
$H$, due to the signature $(+-)$ of $H$.  Each pair of polar points
$q^{(1)},q^{(2)} \in H\setminus \plquad$ determines a pair of polar planes
$P^{(i)} := \inc{h^{(i)},h^{(i)}_j,q^{(i)}}$ in~$\hypplanes$. These planes
define a unique hyperboloid with prescribed asymptotic lines, cf.
Fig.~\ref{fig:poles_in_polar_planes}.  Conversely, any hyperboloid
$\mathcal{H}$ that contains the given discrete asymptotic lines can be
constructed in this way, since associated polar planes $P^{(i)} \in \hypplanes$
intersect $H$ in polar points $q^{(i)}$.
\end{proof}

\begin{figure}[hbt]
\begin{center}
 \input{ ./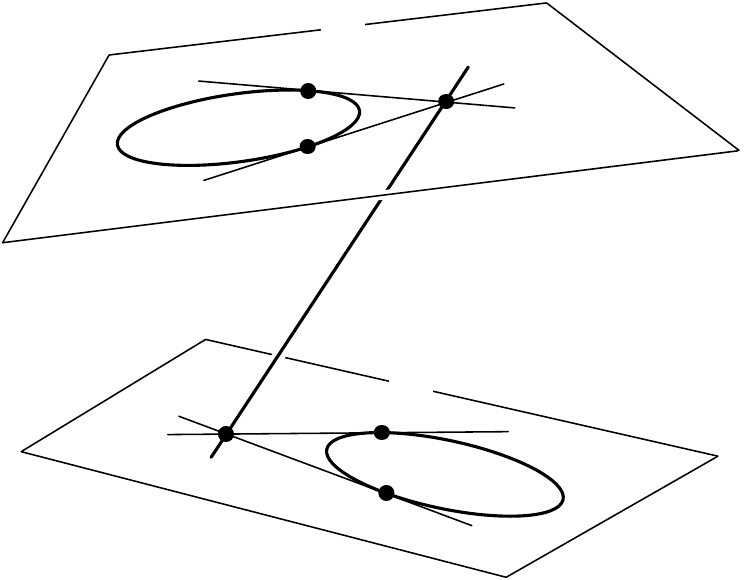_t } 
\end{center}
\caption{Configuration in $\plspace$ which describes the extension of an
elementary quadrilateral of a discrete A-net to a hyperboloid.  Given
representatives $h^{(1)},h_2^{(1)},h^{(2)},h_1^{(2)} \in \plquad$ of the
discrete asymptotic lines, the extension is performed by choosing polar points
$q^{(1)},q^{(2)} \in H$.}
\label{fig:poles_in_polar_planes}
\end{figure}

\begin{remark}
\label{rem:specialQ}
  There are two special choices for $q \in H$, namely the isotropic points in $H
  \cap \plquad$. In this case, $\rp q$ is one of the two diagonals of the
  non-planar quadrilateral in $\RP^3$.  Furthermore, $q$ is self polar and does
  not yield a decomposition of $\plspace$ into two disjoint planes in
  $\hypplanes$ as required by Theorem~\ref{thm:hyperboloids_pluecker}.  Hence
  there is no hyperboloid associated with $q = q^{(1)} = q^{(2)}$. Instead this choice corresponds to
  one of the two limiting cases of hyperboloids $\mathcal{H}$ for which $\rp
  h^{(1)},\rp h_2^{(1)},\rp h^{(2)},\rp h_1^{(2)}$ are asymptotic lines. The
  hyperboloid degenerates to two planes in $\RP^3$ that contain those lines and
  intersect in the diagonal~$\rp q$ (see Fig.~\ref{fig:family} right).
  Considering $P^{(1)} = \inc{h^{(1)},h_2^{(1)}, q}$ and $P^{(2)} =
  \inc{h^{(2)},h_1^{(2)}, q}$ as for hyperboloids, each intersection $P^{(i)}
  \cap \plquad$ with the Pl\"ucker quadric gives a degenerate conic.  These
  degenerate conics each consist of two isotropic lines, i.e., two contact
  elements. The planes of these contact elements are exactly the planes in
  $\RP^3$ intersecting in $\rp q$, and the points of the contact elements are
  the vertices of the quadrilateral that are adjacent to $\rp q$.
\end{remark}

\begin{figure}[b]
  \begin{center}
    \includegraphics[width=.35\linewidth]{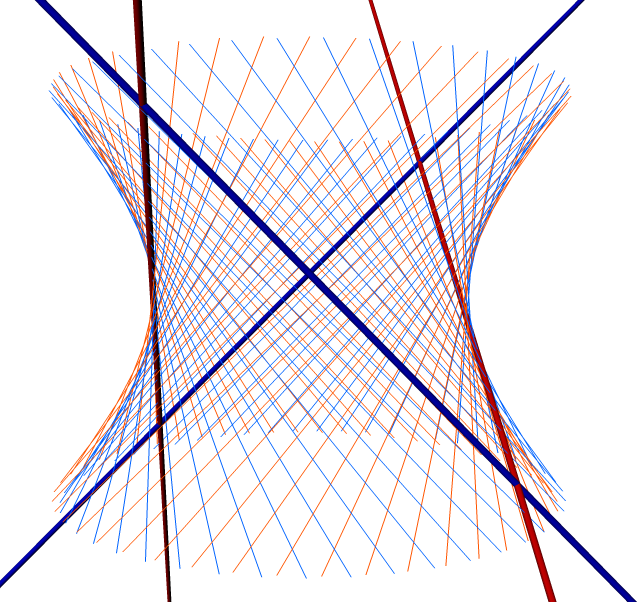} \hspace{1cm}
    \includegraphics[width=.35\linewidth]{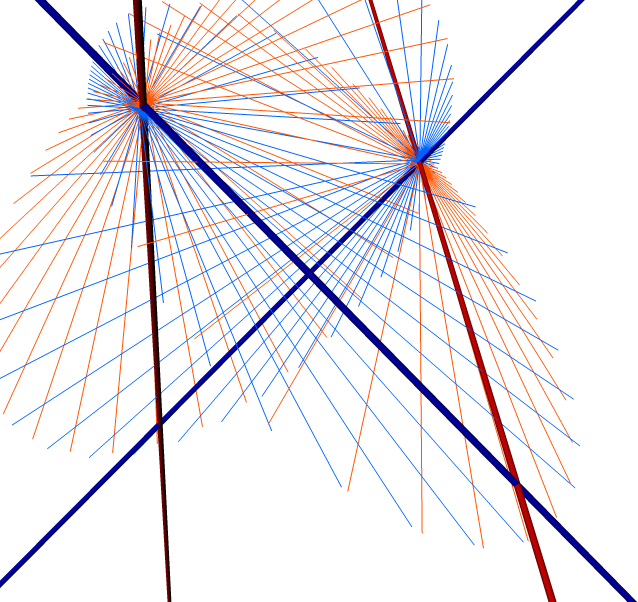} \\[1cm] 
    \includegraphics[width=.35\linewidth]{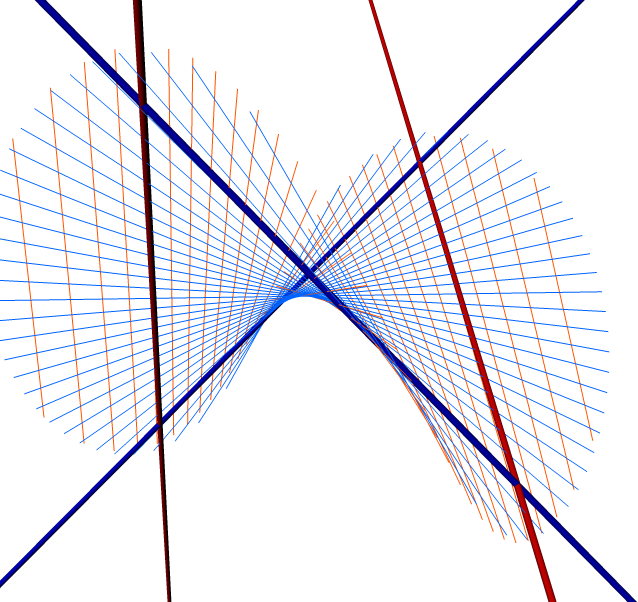} \hspace{1cm}
    \includegraphics[width=.35\linewidth]{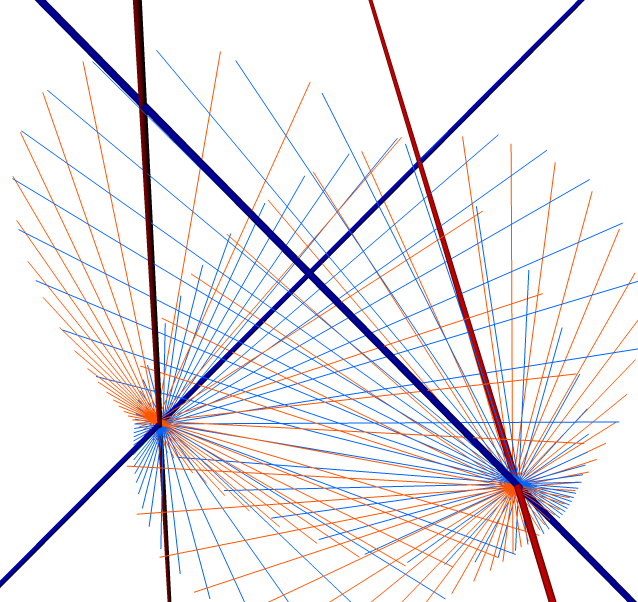}
  \end{center}
  \caption{The 1-parameter family of adapted hyperboloids supporting an elementary
  quadrilateral of an A-net: Two non-degenerate hyperboloids (left) and the two degenerate
  cases (right).} 
  \label{fig:family}
\end{figure}

The $C^1$-condition for adjacent hyperboloids in a pre-hyperbolic net determines
the whole net if one initial hyperboloid is given. In particular, there exists a
1-parameter family of pre-hyperbolic nets for a generic discrete A-net.  To
prove consistency of the evolution, we first describe the propagation of
a hyperboloid from one quadrilateral to an adjacent one as a projection
between~$H$ and $H_i$.

\begin{lemma}[]
  For a pre-hyperbolic net let $Q$ and $Q_i$ be edge-adjacent quadrilaterals of
  the underlying A-net.  Then the line $H_i = H[Q_i]$ is the projection of
  $H=H[Q]$ onto $\pol{h_{ii}^{(j)}}$ through the point $h_i^{(j)}, i \ne j$.  In
  particular, the points $q^{(1)}$ and $q^{(2)}$ are mapped onto $q_i^{(1)}$ and
  $q_i^{(2)}$, cf. Fig.~\ref{fig:projected_H}.
  \label{lem:qs_for_adjacent_hyperboloids}
\end{lemma}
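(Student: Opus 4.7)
The plan is to localize the projection inside a single plane of $\plspace$. Take $i=1$, $j=2$ for concreteness, so the common edge of $Q$ and $Q_1$ has Pl\"ucker point $h_1^{(2)}=L_1\cap L_{12}$ and the opposite edge of $Q_1$ has Pl\"ucker point $h_{11}^{(2)}=L_{11}\cap L_{112}$. Both $H=\pol Q$ and $H_1=\pol{Q_1}$ lie in $\pol{L_1,L_{12}}$, which is a 2-plane of $\plspace$ (since $\inc{L_1,L_{12}}$ is the 2-plane spanned by two isotropic lines meeting at $h_1^{(2)}$). The center of projection $h_1^{(2)}$ itself lies in $\pol{L_1,L_{12}}$ because it is polar to the isotropic lines $L_1,L_{12}$ that contain it, while genericity gives $h_1^{(2)}\notin H$. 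To identify the image I note that $\inc{L_1,L_{12},h_{11}^{(2)}}$ and $\inc{Q_1}$ are both 4-dimensional linear subspaces and the former is contained in the latter; hence they agree, and $\pol{L_1,L_{12},h_{11}^{(2)}}=H_1$. This is exactly the line cut out of the plane $\pol{L_1,L_{12}}$ by the hyperplane $\pol{h_{11}^{(2)}}$, so the perspectivity $\Pi$ from $h_1^{(2)}$ maps $H$ onto $H_1$.

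To show $\Pi(q^{(1)})=q_1^{(1)}$, I invoke the tangency identity in Definition~\ref{def:pre_hyperbolic_net}(ii): the 3-space $V:=\inc{h_1^{(2)},P^{(1)}}$ equals $\inc{h_1^{(2)},P_1^{(1)}}$, so it contains $q^{(1)}\in P^{(1)}$ together with the entire plane $P_1^{(1)}$. Since $\inc{h_1^{(2)},q^{(1)}}\subset V$, we get $\Pi(q^{(1)})\in V\cap\pol{h_{11}^{(2)}}$. The plane $P_1^{(1)}=\inc{h_1^{(1)},h_{12}^{(1)},q_1^{(1)}}$ itself lies in $\pol{h_{11}^{(2)}}$, because $h_1^{(1)}$ and $h_{12}^{(1)}$ share a vertex contact element with $h_{11}^{(2)}$ (so they are polar to it) and $q_1^{(1)}\in H_1\subset\pol{h_{11}^{(2)}}$. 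As $h_1^{(2)}\not\perp h_{11}^{(2)}$ (the two edges are generically skew in $\RP^3$), $V$ is not contained in $\pol{h_{11}^{(2)}}$, so $V\cap\pol{h_{11}^{(2)}}$ is a 2-plane, which must coincide with the 2-plane $P_1^{(1)}$. Hence $\Pi(q^{(1)})\in P_1^{(1)}$; combined with $\Pi(q^{(1)})\in H_1$ and $H_1\cap P_1^{(1)}=\{q_1^{(1)}\}$, this yields $\Pi(q^{(1)})=q_1^{(1)}$.

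The $(2)$-direction is more delicate because $h_1^{(2)}\in P^{(2)}$, so no incidence identity of the previous form is available. Instead I show that $\Pi$ intertwines the polar involutions on the signature-$(+-)$ lines $H$ and $H_1$ by verifying that it maps isotropic points to isotropic points. Writing $\widehat{\Pi(q)}=\alpha\,\widehat{h_1^{(2)}}+\beta\,\widehat q$, the constraint $\Pi(q)\in\pol{h_{11}^{(2)}}$ fixes the ratio $\alpha:\beta$, and $\beta\neq 0$ because $h_1^{(2)}\notin\pol{h_{11}^{(2)}}$. Using that $h_1^{(2)}$ is isotropic and $h_1^{(2)}\perp H$ (since $h_1^{(2)}\in\inc Q$ and $H=\pol Q$), the two cross-terms in the expansion of $\langle\widehat{\Pi(q)},\widehat{\Pi(q)}\rangle$ vanish, leaving $\beta^2\langle\widehat q,\widehat q\rangle$. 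Hence $\Pi$ sends $H\cap\plquad$ to $H_1\cap\plquad$ and thus commutes with the polar involutions; since $q^{(2)}$ is the polar partner of $q^{(1)}$ on $H$ and $q_1^{(2)}$ that of $q_1^{(1)}$ on $H_1$, we conclude $\Pi(q^{(2)})=q_1^{(2)}$. The main obstacle is exactly this asymmetry between the two ruling directions: the tangency condition directly constrains only the $(1)$-planes, so the $(2)$-correspondence has to be extracted from the Pl\"ucker inner product rather than a second incidence identity.
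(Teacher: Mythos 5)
Your proposal is correct and takes essentially the same route as the paper's proof: the tangency condition of Definition~\ref{def:pre_hyperbolic_net}(ii) forces the $(1)$-labelled data onto $P_i^{(1)}$, and the observation that the central projection from the isotropic center $h_i^{(j)}$ (which is polar to every point of $H$) rescales the Pl\"ucker product along $H$ — hence preserves polarity — is exactly how the paper transfers the statement to the $(2)$-labelled points. The only cosmetic difference is that you obtain $\tau(H)=H_i$ by the incidence identification $H_i=\pol{L_i,L_{ij}}\cap\pol{h_{ii}^{(j)}}$ inside the plane $\pol{L_i,L_{ij}}$, whereas the paper deduces it from polarity preservation; both are valid.
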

\begin{figure}[ht]
  \begin{center}
     \input{ ./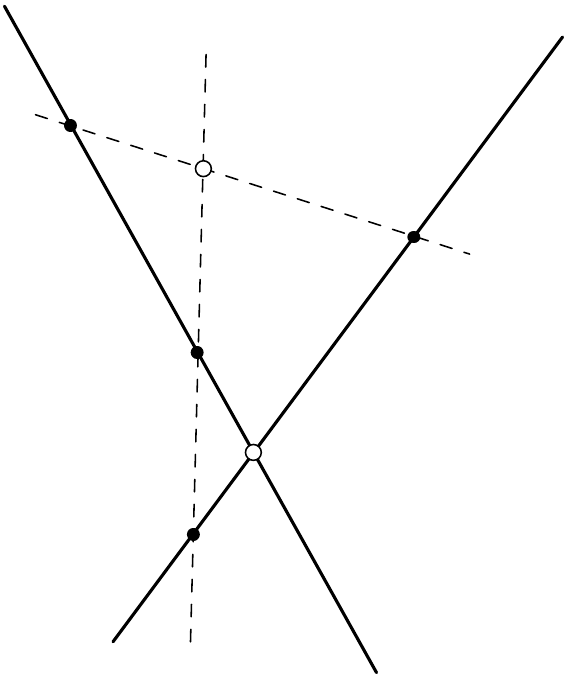_t } 
  \end{center}
  \caption{For pre-hyperbolic nets, $H_i$ is the projection of $H$ onto
  $\pol{h_{ii}^{(j)}}$ through $h_i^{(j)}$. In particular $(q^{(1)},q^{(2)})$
  is mapped to $(q_i^{(1)},q_i^{(2)})$ and $h_i^{(j)}$ lies in the plane
  $\inc{H,H_i}$.}
  \label{fig:projected_H}
\end{figure}
\begin{proof}
For generic A-nets, the point $h_i^{(j)} \in \plquad$, $i \ne j$, is not
contained in $\pol{h_{ii}^{(j)}}$ and the projection \begin{eqnarray}
\label{eq:q_projection} \tau : \plspace \setminus \{ h_i^{(j)} \} &\to&
\pol{h_{ii}^{(j)}}\\ \nonumber q &\mapsto& \inc{q,h_i^{(j)}} \cap
\pol{h_{ii}^{(j)}} \end{eqnarray} through $h_i^{(j)}$ is well defined.  It is
crucial that $\tau$ maps $\pol{h_i^{(j)}}$ onto $\pol{h_i^{(j)}} \cap
\pol{h_{ii}^{(j)}}$.  The reason is that $h_i^{(j)}$ is isotropic, and hence
for all $q \in \pol{h_i^{(j)}}$ the projection line $\inc{q,h_i^{(j)}}$ is
completely contained in $\pol{h_i^{(j)}}$.  Since for a pre-hyperbolic net
$\inc{P^{(i)},h_i^{(j)}} = \inc{P_i^{(i)},h_i^{(j)}}$, the space
$P^{(i)} \subset \pol{h_i^{(j)}}$ is mapped to $P_i^{(i)} \subset
\pol{h_i^{(j)}} \cap \pol{h_{ii}^{(j)}}$, cf.
Fig.~\ref{fig:contact_element_cone}.
This means
\begin{equation}
\tau (P^{(i)}) = P_i^{(i)}
\label{eq:hypplanes_are_mapped_onto}
\end{equation}
and in particular
\begin{equation}
\tau (h^{(i)}) = h_i^{(i)} \quad \text{and} \quad \tau (h_j^{(i)}) = h_{ij}^{(i)}.
\label{eq:pluecker_lines_are_mapped_onto}
\end{equation}
The claim of the Lemma
\begin{equation}
\tau (H) = H_i
\label{eq:plusplus_Lines_are_mapped_onto}
\end{equation}
holds, since $\tau$ preserves polarity: 
Let $q,q' \in \pol{h_i^{(j)}}$ with $\langle \pc q , \pc q' \rangle = 0$.
Thus for all $\alpha,\alpha',\beta,\beta' \in \R$ we have
\begin{equation*}
\langle \alpha \, \pc q + \beta \, \pc h_i^{(j)} , \alpha' \, \pc q' + \beta' \, \pc h_i^{(j)} \rangle = 0
\quad \implies \quad
\tau(q), \tau(q') \text{ are polar}.
\end{equation*}
With Eq.~\eqref{eq:pluecker_lines_are_mapped_onto} and definition of $\tau$
this implies that $\pol{h_i^{(j)},h^{(i)},h_j^{(i)}} \supset H$ is mapped onto
$\pol{h_i^{(j)},h_i^{(i)},h_{ij}^{(i)},h_{ii}^{(j)}} = H_i$.  Together with
Eq.~\eqref{eq:hypplanes_are_mapped_onto} and
Eq.~\eqref{eq:pluecker_lines_are_mapped_onto}, preservation of polarity also
implies that $q^{(i)}$ (the unique point in $P^{(i)}$ that is polar to
$h^{(i)}$ and $h_j^{(i)}$, cf.  Fig.~\ref{fig:poles_in_polar_planes}) is mapped
to $q_i^{(i)}$ (the unique point in $P_i^{(i)}$ that is polar to $h_i^{(i)}$
and $h_{ij}^{(i)}$).  Finally the preservation of polarity toghether with
Eq.~\eqref{eq:plusplus_Lines_are_mapped_onto} gives $\tau(q^{(j)}) =
q_i^{(j)}$.
\end{proof}

We are now ready to proof the main theorem about pre-hyperbolic nets.

\begin{theorem}
  For a given simply connected generic A-net with all interior vertices of even
  degree, there exists a 1-parameter family of pre-hyperbolic nets.  Each such
  net is determined by the choice of one initial hyperboloid.
  \label{thm:nets_of_hyperboloids_from_anets}
\end{theorem}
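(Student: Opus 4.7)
My plan is to pick an initial hyperboloid on a single face $Q_0$ and propagate it face by face through the quad-graph $\mathcal{D}$ by iterated application of the projection $\tau$ from Lemma~\ref{lem:qs_for_adjacent_hyperboloids}. Since $\mathcal{D}$ is simply connected, path-independence of the propagation reduces to the local closure of $\tau$'s around each interior vertex, so the main obstacle is to verify this closure at an arbitrary interior vertex of even degree.

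Fix such a vertex $v$ of degree $d$, write $l_1,\dots,l_d \in L(v)$ for the incident asymptotic lines and $Q_1,\dots,Q_d$ for the incident faces (indices mod $d$), with $Q_k$ bounded at $v$ by $l_k$ and $l_{k+1}$. Let $\tau_k : H[Q_k] \to H[Q_{k+1}]$ denote the projection through the shared edge $l_{k+1}$, and set $M := \tau_d \circ \cdots \circ \tau_1 : H[Q_1] \to H[Q_1]$ for the monodromy. The first observation I would exploit is that everything takes place inside a single $3$-plane $U := \pol{L(v)}$: since $L(v)$ is a vertex of each $Q_k$, one has $L(v) \subset \inc{Q_k}$, hence $H[Q_k] = \pol{Q_k} \subset U$; and each projection point $l_{k+1}$ already lies in $L(v) \subset U$.

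The key step is then a short quotient computation. Consider the linear projection $\pi : U \to U/L(v)$, whose image is a projective line. By the genericity assumption no diagonal of any $Q_k$ lies on $L(v)$, so $H[Q_k] \cap L(v) = \emptyset$ and $\pi|_{H[Q_k]}$ is a projective isomorphism. For any $q \in H[Q_k]$ the image $\tau_k(q)$ lies on the line $\inc{q,l_{k+1}}$, and since $\pi(l_{k+1}) = 0$ one has $\pi(\tau_k(q)) = \pi(q)$ in $U/L(v)$. (A quick check of the linear expansion $\widehat{\tau_k(q)} = a\,\widehat q + b\,\widehat{l_{k+1}}$ shows $a \neq 0$, since $l_{k+1}$ and the opposite edge of $Q_{k+1}$ are skew in $\RP^3$ and thus not polar in $\plspace$.) Hence each $\tau_k$ is identified, via $\pi$, with the canonical projective isomorphism $H[Q_k] \cong U/L(v) \cong H[Q_{k+1}]$, and therefore $\pi \circ M = \pi|_{H[Q_1]}$, which by injectivity forces $M = \mathrm{id}_{H[Q_1]}$.

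To upgrade this to the statement about \emph{labeled} polar pairs $(q^{(1)},q^{(2)})$ demanded by Definition~\ref{def:pre_hyperbolic_net}, I would invoke the even-degree hypothesis. By Remark~\ref{rem:propagation_preserves_orientation}, $\tau$ preserves the signatures of the polar planes $P^{(1)},P^{(2)}$, while the combinatorics of crossing $l_2,l_3,\dots,l_1$ in succession swaps the role of the two edge classes at $v$ with each step: the two edges of $Q_k$ at $v$ belong to different classes, while the class of the shared edge is preserved when crossing to the next face. The labels therefore return to their starting assignment precisely when $d$ is even, and combined with $M = \mathrm{id}$ this yields a consistent labeled pre-hyperbolic net. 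Letting the initial polar pair on $Q_0$ range over the $1$-parameter family of Lemma~\ref{lem:hyperboloids_from_anets} then produces the claimed $1$-parameter family of pre-hyperbolic nets.
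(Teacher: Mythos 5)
Your proposal is correct and follows essentially the same route as the paper: reduce consistency to closure around a single interior vertex, observe that all the lines $H[Q_k]$ and all the projection centers lie in the $3$-space $\pol{L(v)}$ with the centers on the isotropic line $L(v)$ itself, conclude that the monodromy is the identity, and then use even degree to make the labeling of the polar pair $(q^{(1)},q^{(2)})$ return to itself. Your quotient map $\pi : U \to U/L(v)$ is just a slightly more formal packaging of the paper's statement that all the point-projections are ``one and the same projection through $L_{12}$'' acting on different (co-)domains.
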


\begin{proof}
Lemma~\ref{lem:hyperboloids_from_anets} shows that for each elementary
quadrilateral of an A-net, there is a 1-parameter family of compatible
hyperboloids.  Hence we have a 1-parameter freedom for the choice of one
initial hyperboloid.  Lemma~\ref{lem:qs_for_adjacent_hyperboloids} explains how
this hyperboloid determines its four neighboring hyperboloids associated with
edge-adjacent quadrilaterals. It remains to show that this evolution is
consistent, i.e., that the propagation of hyperboloids is independent of the
particular chosen path. Since the quad-graph is simply connected, there are no
non-trivial cycles. Thus we have to consider consistency of the evolution
around single inner vertices only.

\begin{figure}[tb] 
  \begin{center}
     \input{ ./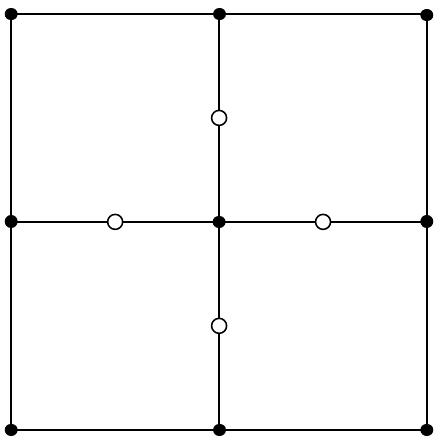_t } 
  \end{center}
  \caption{All four projection centers $h_i^{(j)}$ lie on the line $L_{12}$.} 
  \label{fig:four_quad_projection}
\end{figure} 

First, we consider a regular vertex of degree four as in
Fig.~\ref{fig:four_quad_projection}. We start with the two points $q^{(1)}$ and
$q^{(2)}$ on $H = \pol{Q}$, which determine the hyperboloid $\mathcal{H}$
associated with the lower left quadrilateral. By
Lemma~\ref{lem:qs_for_adjacent_hyperboloids}, the projections of
$(q^{(1)},q^{(2)})$ onto $H_1$ and $H_2$ according to
Eq.~\eqref{eq:q_projection} yield the adjacent hyperboloids $\mathcal{H}_1$ and
$\mathcal{H}_2$, respectively.  Consistency for all initial data means that the map used for
propagation of the pair $(q^{(1)},q^{(2)})$ around the vertex is the identity.
More precisely, we have to check whether the evolution satisfies two properties.
Firstly, according to the notation of Lemma~\ref{lem:qs_for_adjacent_hyperboloids},
it has to hold
\begin{equation} 
  \tau_2^{(1)} \circ \tau_{12}^{(2)} \circ \tau_{12}^{(1)} \circ \tau_1^{(2)} = \id
  \quad \Leftrightarrow \quad
  \tau_{12}^{(1)} \circ \tau_1^{(2)} = \tau_{12}^{(2)} \circ \tau_2^{(1)},
  \label{eq:consistency_condition_for_projections} 
\end{equation} 
where lower and upper indices correspond to the center of projection.  Here
$\tau_2^{(1)}, \tau_{12}^{(2)}, \tau_{12}^{(1)}, \tau_1^{(2)}$ are understood
as involutions, for example the projection $\tau_1^{(2)} : H \to H_1$ through
$h_1^{(2)}$ as map in $\plspace$ is inverted by exchanging domain and co-domain
(cf.  Fig.~\ref{fig:projected_H}).  Moreover, we have to prove that the
labeling of $q$'s is preserved. 

To prove Eq.~\eqref{eq:consistency_condition_for_projections}, first note that
by definition of the lines $H,H_1,H_{12},H_2$, the space $V =
\inc{H,H_1,H_{12},H_2}$ is contained in the 3-space $\pol{L_{12}} \supset
L_{12}$.  Moreover, $\inc{H,H_1} \cap L_{12} = h_1^{(2)}$ and similarly for
the other $H$-lines, cf.~Fig.~\ref{fig:projected_H}.  As the four intersection
points $h_1^{(2)},h_{12}^{(1)},h_{12}^{(2)},h_2^{(1)} \in L_{12}$ are distinct,
the space $V$ has to be at least 3-dimensional and hence $V = \pol{L_{12}}$.
As a consequence we can restrict ourselves to the projective 3-space $V$
and understand the projections $\tau_2^{(1)}, \tau_{12}^{(2)}, \tau_{12}^{(1)},
\tau_1^{(2)}$ as one and the same projection $\tau$ through $L_{12}$, just
acting on different (co-)domains.  This proves
Eq.~\eqref{eq:consistency_condition_for_projections}.

So far, exactly the same argumentation holds for arbitrary degree of the central vertex
greater than 4 and the corresponding identities analogous to
\eqref{eq:consistency_condition_for_projections}. However, in contrast to that, the
labeling of $q$'s is only preserved for even vertex degree: Given a point $q \in H =
\pol{Q}$ one can understand the labeling $q \to q^{(i)}$ as association of $q$
to an edge that is adjacent to the central vertex, say $v$, since the two edges of $Q$
adjacent to $v$ are natural representatives of the two reguli of any admissible
hyperboloid.  Now if $q$ is propagated $q \mapsto q_i = \tau_i^{(j)}(q)$, then
the labeling, i.e., allocation of $q$ to an edge adjacent to $v$, has to be
propagated accordingly (cf. Fig.~\ref{fig:propagation_of_q_labeling}). It
follows, that the labeling of $q$'s is propagated consistently, if and only if
$v$ is of even degree.
\end{proof}

\begin{figure}[h]
  \begin{center} 
     \input{ ./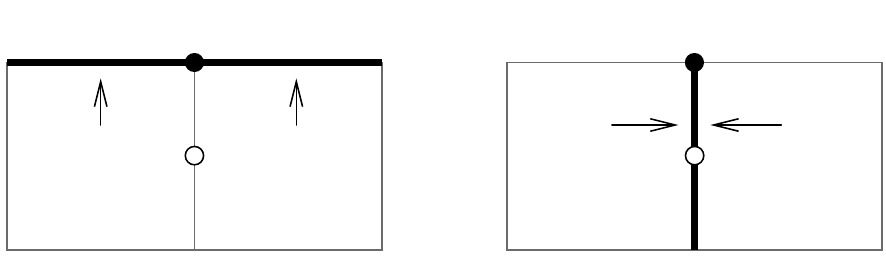_t }  
  \end{center}
  \caption{Propagation of the labeling of $q$'s with respect to the map $\tau_1^{(2)}$,
  represented by edges adjacent to the central vertex.}
  \label{fig:propagation_of_q_labeling}
\end{figure}

\begin{remark}
A simply connected quad-graph is either a disc or a sphere, in particular it is
orientable.
Moreover, if all interior vertices are of even
degree, it has to be a disc.  This follows from the Euler characteristic,
which yields that a strongly regular cell-decomposition of a sphere
necessarily contains either triangles or vertices of degree $3$.  One can also
show this way, that the considered quad-graphs do not contain self-intersecting
or closed quadrilateral strips (see Definition~\ref{def:stripOrientation}).
\end{remark}

\section{Hyperbolic nets}
\label{sec:hyperbolic_nets}

This section aims at the extension of affine A-nets to piecewise smooth
$C^1$-surfaces, called \emph{hyperbolic nets}. An A-net in $\RP^3$ is turned
into an affine A-net by choosing an affine chart of $\RP^3$ and equipping the
A-net with finite edges connecting adjacent vertices, see
Section~\ref{subsec:anets}.  The elementary quadrilaterals of affine A-nets are
therefore skew quadrilaterals in $\R^3$. Extending an affine A-net to a
hyperbolic net amounts to fitting hyperboloid surface patches into the skew
quadrilaterals in such a way that the connection between edge-adjacent patches are
differentiable. Related to the propagation of hyperboloids in pre-hyperbolic
nets, a surface patch glued to a skew quadrilateral determines its edge-adjacent
neighbors via the $C^1$-property.  Starting with one initial
hyperboloid patch, the consistency of this propagation is derived from the
consistency of the propagation of hyperboloids for pre-hyperbolic nets.  It
turns out that not all affine A-nets that allow for an extension to pre-hyperbolic
nets can also be extended to hyperbolic nets. The additional characteristic
property is shown to be ``equi-twist'' of quadrilateral
strips.

We discuss the orientation of reguli and introduce the related twist of skew
quadrilaterals in Section~\ref{subsec:orientation_and_twist}.  In
Section~\ref{subsec:hyperbolic_nets}, we first investigate hyperboloid surface
patches focussing on patches bounded by a prescribed skew quadrilateral.
Then we introduce hyperbolic nets and show how to construct hyperbolic nets
from affine A-nets in Theorem~\ref{thm:extension}. Finally,
some comments are made on the computer implementation of our results.

\subsection{Orientation of reguli and twist of skew quadrilaterals}
\label{subsec:orientation_and_twist}

Any three skew lines in $\RP^3$ span a regulus.  The corresponding plane in
$\hypplanes$ is either of signature $({+}{+}{-})$ or of signature
$({+}{-}{-})$, where the different signatures correspond to the two possible
orientations of reguli.  Hence an orientation in~$\RP^3$ is defined for three
skew lines.  A skew quadrilateral in $\RP^3$ consists of four points in general
position connected by line segments (edges).  The notion of orientation for reguli induces a natural
orientation for a pair of opposite edges of a skew quadrilateral. We call this
orientation the \emph{twist} of an opposite edge pair.

\paragraph{Orientation of reguli.}
Consider a hyperboloid $\mathcal{H}$ in~$\RP^3$ that is described by two polar
planes~$P^{(1)}$ and~$P^{(2)}$ in $\plspace$, which are spanned by
representatives of arbitrary three lines of the corresponding reguli of
$\mathcal{H}$.
Assume $P^{(1)}$ is of signature $({+}{+}{-})$ and $P^{(2)}$ is of signature
$({+}{-}{-})$.  The signature of a plane $P$ is the signature of the Plücker
product restricted to the 3-space $\pc P$ of homogeneous coordinates. With
respect to a particular basis, the restriction $\langle \cdot , \cdot \rangle
\big|_{\pc P}$ is represented by a symmetric $3 \times 3$ matrix $M_P$, cf.
Appendix~\ref{appendix}.  Due to Sylvester's law of inertia, the signature of
$M_P$ is independent of the chosen basis and coincides with the signature of
$\langle \cdot , \cdot \rangle \big|_{\pc P}$.  As the determinant of a matrix
is the product of its eigenvalues, we have
\begin{equation*}
\det(M_{P^{(1)}}) < 0 \quad \text{and} \quad \det(M_{P^{(2)}}) > 0.
\end{equation*}
According to the sign of the determinant, the regulus $P^{(1)} \cap \plquad$ is
said to be of negative orientation, while $P^{(2)} \cap \plquad$ is
said to be of positive orientation.  For the calculation of the orientation
of a regulus (or, equivalently, the orientation of three skew lines)
in coordinates, see Appendix~\ref{appendix}.

\begin{remark}
There are different choices of Pl\"ucker line coordinates, and the orientation
of a regulus (of three skew lines) depends on the particular choice.  In a fixed
affine chart, one can identify the two possible orientations of reguli with
clockwise and counter-clockwise screw motions in space.  However, this
interpretation depends on the choice of the affine chart -- while for fixed
Pl\"ucker coordinates the orientation of a regulus is well defined, in
different affine charts the same regulus may appear rotating either clockwise or
counter-clockwise.
\label{rem:screw_motion}
\end{remark}

Now recall the extension of an elementary quadrilateral $Q$ of an A-net to a
hyperboloid, as discussed in Section~\ref{subsec:prehypnets}.  By
Lemma~\ref{lem:hyperboloids_from_anets} there exists a 1-parameter family of
different hyperboloids, whose asymptotic lines contain the discrete asymptotic
lines of the A-net (see Fig.~\ref{fig:family}). The 1-parameter family of
hyperboloids corresponds to the 1-parameter family of polar points $q, q^* \in
H[Q] = \pol{h^{(1)}, h^{(1)}_2,h^{(2)}, h^{(2)}_1}$.  We ignore the degenerate
case $q = q^*$, which is equivalent to $\langle \pc q, \pc q \rangle = 0$ (see
Remark~\ref{rem:specialQ}).  In the generic case $\langle \pc q, \pc q \rangle$
and $\langle \pc{q}^*, \pc{q}^* \rangle$ have opposite signs. Such a generic pair
of polar points yields two different hyperboloids, depending on the labeling of
the points, i.e., either $q^{(1)} = q$ and $q^{(2)}=q^*$ or the other way
around.  The essential difference between the labelings is the following:
Changing the labeling also changes the signatures of $P^{(1)} = \inc{h^{(1)},
h^{(1)}_2, q^{(1)}}$ and $P^{(2)} = \inc{h^{(2)}, h^{(2)}_1, q^{(2)}}$, i.e.,
the orientation of the hyperbolic families of lines $P^{(1)} \cap \plquad$ and
$P^{(2)} \cap \plquad$.

As mentioned in Remark~\ref{rem:screw_motion}, in a fixed affine chart the
orientation of a regulus can be described as a clockwise or counter-clockwise
screw motion. We will now explain this geometric interpretation in the context
of the extension of a quadrilateral of an A-net to an adapted hyperboloid.
Extending the pair of lines $\rp h^{(1)}$ and $\rp h_2^{(1)}$ to a regulus of
an adapted hyperboloid corresponds to
choosing a third line~$\rp h$ that is skew to $\rp h^{(1)}$ and $\rp
h_2^{(1)}$, but that intersects $\rp h^{(2)}$ and $\rp h_1^{(2)}$, cf.
Fig.~\ref{fig:regulus_orientation}. Referring to the affine chart used in
Fig.~\ref{fig:regulus_orientation}, the regulus $\mathcal{H}^{(1)} =
\inc{h^{(1)}, h, h_2^{(1)}} \cap \plquad$ is oriented clockwise.  This reflects
how the lines of $\mathcal{H}^{(1)}$ twist around any line of the complementary
regulus $\mathcal{H}^{(2)} = \pol{h^{(1)}, h, h_2^{(1)}} \cap \plquad$.  In
particular, the twist direction is independent of whether you choose $\rp
h^{(2)}$, or $\rp h_1^{(2)}$ as reference. It is also independent of the
direction of traversal of the reference line.  (Keep in mind that every two
lines of one regulus of a hyperboloid are skew.  Hence for each regulus there
is a unique screw motion that identifies three lines of a regulus.)
If one considers the regulus
$\tilde{\mathcal{H}}^{(1)} = \inc{h^{(1)}, \tilde h, h_2^{(1)}} \cap \plquad$,
that is spanned by the skew lines $\rp h^{(1)}, \rp{\tilde h}$, and~$\rp
h_2^{(1)}$ instead, then $\tilde{\mathcal{H}}^{(1)}$ is oriented
counter-clockwise. 

\begin{figure}[htb]
  \begin{center}
     \input{ ./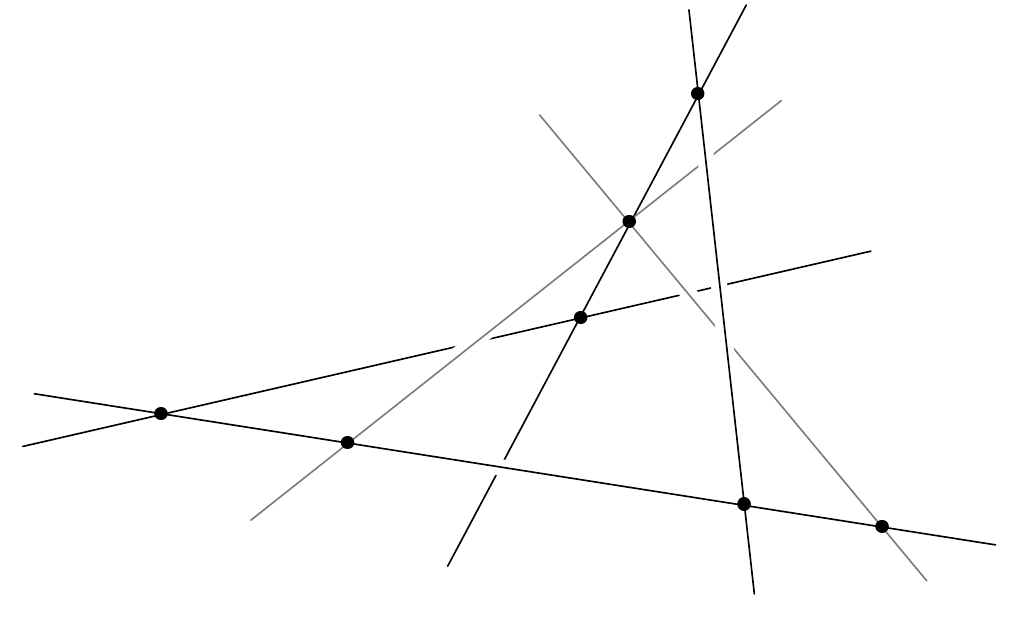_t } 
  \end{center}
  \caption{
    With respect to a fixed affine chart, the orientation of a regulus can be 
    referred to as a clockwise or counter-clockwise screw motion.
    The regulus spanned by $\rp h^{(1)}, \rp h, \rp h_2^{(1)}$ is oriented
    clockwise, while the regulus spanned by $\rp h^{(1)}, \rp{\tilde h}, \rp
    h_2^{(1)}$ is oriented counter-clockwise.}
  \label{fig:regulus_orientation}
\end{figure}

\paragraph{Twist of skew quadrilaterals.}
Any skew quadrilateral in $\RP^3$ can be seen as a skew quadrilateral in
$\R^3$, i.e., there is an affine chart such that all four edges of the
quadrilateral are finite. Accordingly, consider Fig.~\ref{fig:regulus_orientation} in
affine $\R^3$, such that there is a unique skew quadrilateral with vertices $\rp
x$, $\rp x_1$, $\rp x_2$, and $\rp x_{12}$ and finite edges connecting them.
The \emph{twist of the opposite edge pair} $(\rp x,\rp x_1)$ and $(\rp x_2,\rp
x_{12})$ is defined as the orientation of any regulus that is spanned by $\rp
h^{(1)}, \rp h^{(1)}_2$, and a third line $\rp h$, where $\rp h$ is skew to
$\rp h^{(1)}, \rp h^{(1)}_2$ but intersects the complementary lines $\rp
h^{(2)}$ and~$\rp h^{(2)}_1$ in their finite segments, i.e., in the edges of
$Q$.  So the twist of an opposite edge pair can be either positive, or
negative.  Intuitively it is clear, that the twist does not depend on the
particular choice of the line $\rp h$; a proof using coordinates can be found in Appendix~\ref{appendix}. The other
edge pair, $(\rp x,\rp x_2)$ and $(\rp x_1,\rp x_{12})$, automatically has the
opposite twist.

\subsection{Hyperbolic nets}
\label{subsec:hyperbolic_nets}

In Section~\ref{sec:preHypNets} it was shown that simply connected A-nets with
all interior vertices of even degree can be extended to pre-hyperbolic nets.
Now we will characterize affine A-nets that can be extended to piecewise smooth
$C^1$-surfaces parametrized along asymptotic lines, by attaching hyperboloid
patches to the skew quadrilaterals. The resulting surfaces are called
hyperbolic nets. We start with the formal definition of hyperboloid patches and
the extension of skew quadrilaterals to such patches, before defining
hyperbolic nets similar to pre-hyperbolic nets.

\begin{definition}[Hyperboloid patch]
A \emph{hyperboloid patch} is a (parametrized) surface patch, obtained by
restricting a (global) asymptotic line parametrization of a hyperboloid to a
closed rectangle.
\label{def:hyperboloid_patch}
\end{definition}

Geometrically, a hyperboloid patch is a piece of a hyperboloid cut out along
four asymptotic lines, two per regulus, cf.  Fig.~\ref{fig:hyperboloid_patch}.
Conversely, four asymptotic lines of a hyperboloid cut the hyperboloid into
four patches, since a hyperboloid in $\RP^3$ is a torus,
topologically.\footnote{In our setting a natural way to obtain a homeomorphism
between $S^1 \times S^1$ and the contact elements of a hyperboloid is, to
parametrize each of the corresponding hyperbolic families of lines over $S^1$
and use the structure explained in Fig.~\ref{fig:hyperbolic_planes}.}
If we choose a plane at infinity, then this plane intersects some, or all of
the patches. More precisely, in the affine setting a hyperboloid is cut by four
asymptotic lines into either four infinite patches or into three infinite patches
and one finite patch.

\begin{figure}[htb]
  \begin{center}
    \includegraphics[scale=.13]{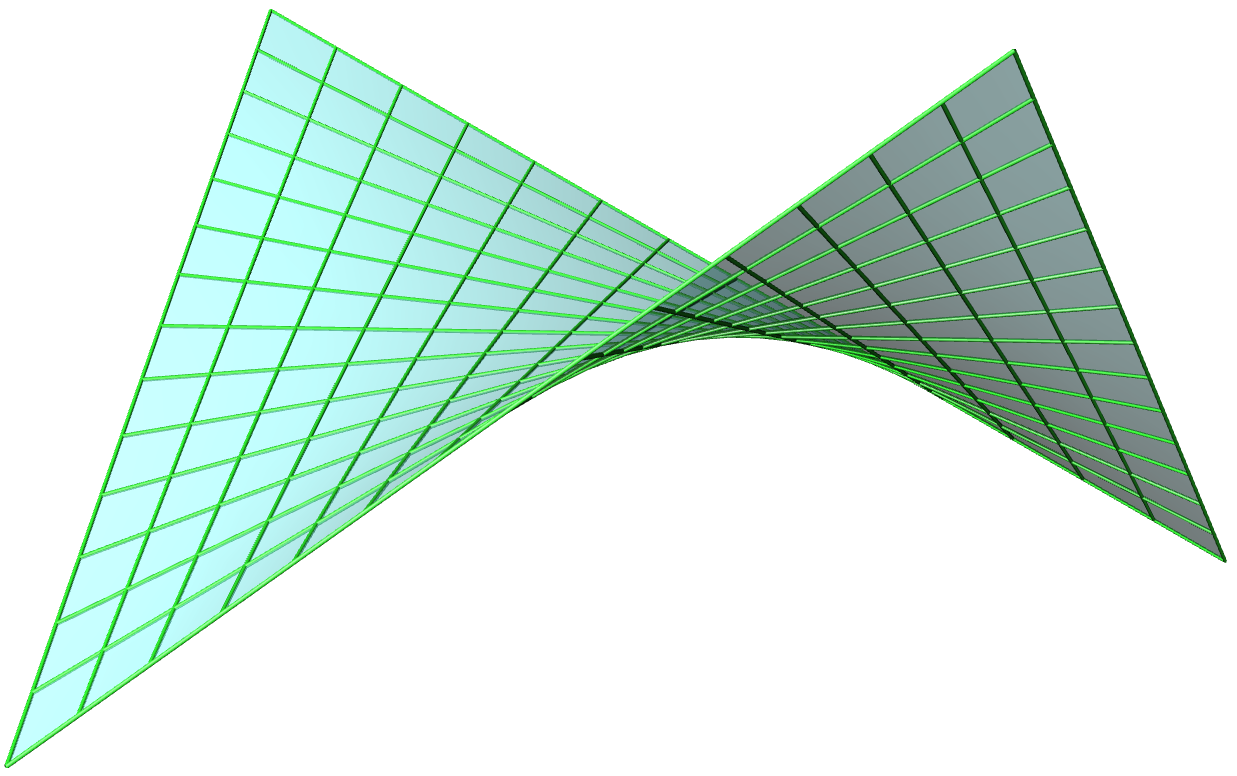}
    \hspace{25pt}
     \input{ ./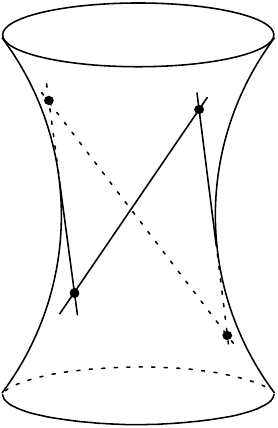_t } 
  \end{center}
  \caption{Left: A finite hyperboloid patch.
	Right: A finite skew quadrilateral on a hyperboloid that does not bound a hyperboloid patch.}
  \label{fig:hyperboloid_patch}
\end{figure}

Using the results of Section~\ref{subsec:orientation_and_twist}, we will now
discuss the extension of skew quadrilaterals to hyperboloid patches.
Lemma~\ref{lem:hyperboloids_from_anets} tells us, that for a skew quadrilateral
$Q$ there exists a 1-parameter family of adapted hyperboloids.  The crucial
observation is, that such a hyperboloid can be restricted to a hyperboloid
patch that is bounded by $Q$, if and only if the twists of the opposite edge
pairs of $Q$ coincide with the corresponding orientations of the reguli of
$\mathcal{H}$, cf. Fig.~\ref{fig:hyperboloid_patch}. The reason for that is,
that there is a patch bounded by $Q$, if and only if asymptotic lines of
$\mathcal{H}$ that intersect one edge of $Q$ also intersect the opposite edge
of $Q$ (recall the definition of the twist of opposite edge pairs and consider
Fig.~\ref{fig:regulus_orientation}). If the twist of one opposite edge pair
coincides with the orientation of the supporting regulus, this is obviously
also the case for the other edge pair. If the edges of $Q$ are finite, then any
patch bounded by $Q$ is also finite, cf.  Fig.~\ref{fig:pathesAdaptedToQuad}.

\begin{figure}[tbh]
  \begin{center}
    \includegraphics[width=.30\linewidth]{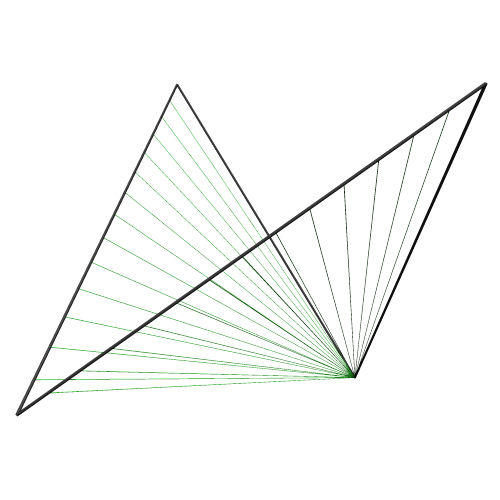}
    \includegraphics[width=.30\linewidth]{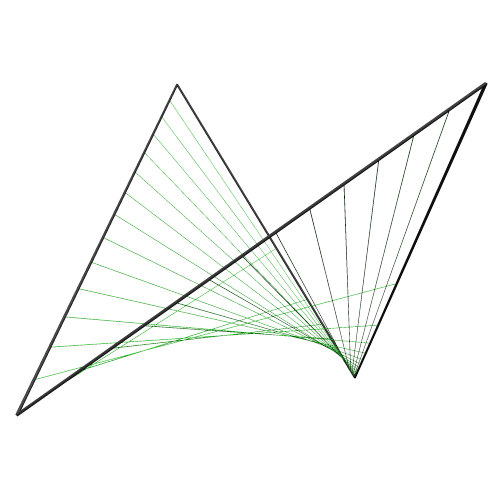}
    \includegraphics[width=.30\linewidth]{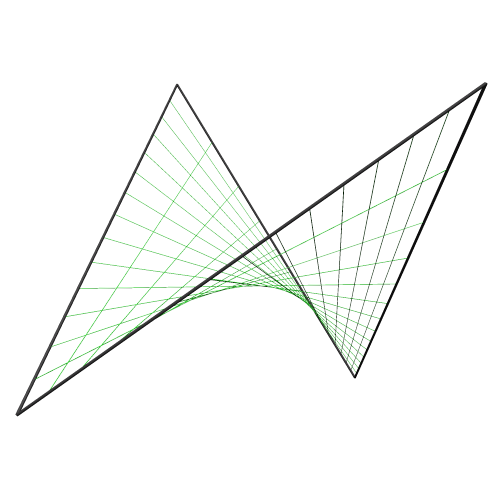}
    \includegraphics[width=.30\linewidth]{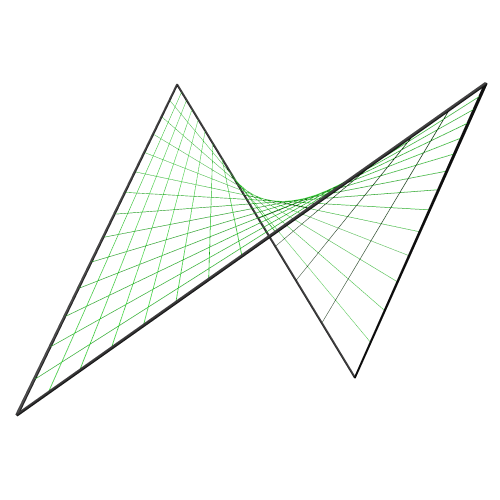}
    \includegraphics[width=.30\linewidth]{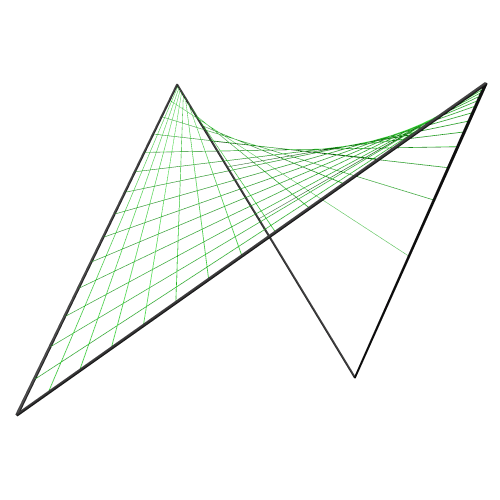}
    \includegraphics[width=.30\linewidth]{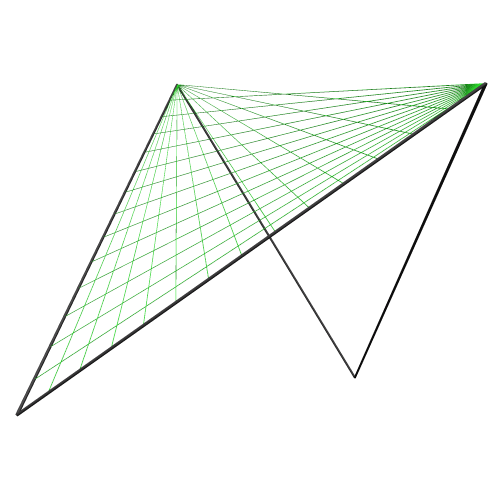}
  \end{center}
  \caption{Hyperboloid patches adapted to a skew quadrilateral.}
  \label{fig:pathesAdaptedToQuad}
\end{figure}

We summarize the above in the following Lemma~\ref{lem:adaptedPatch}

\begin{lemma}
  Let~$\mathcal{H}$ be a hyperboloid and $Q$ be a skew quadrilateral on
  $\mathcal{H}$. The hyperboloid can be restricted to a hyperboloid patch that is 
  bounded by~$Q$, if and only if the twist of each opposite edge pair of $Q$
  coincides with the orientation of the regulus of $\mathcal{H}$ containing the
  edge pair.  If twist and orientation coincide for one pair, they coincide for
  the other pair as well. The restriction is unique, if it exists. With respect
	to an affine chart the obtained patch is finite, if and only if $Q$ is finite. 
  \label{lem:adaptedPatch}
\end{lemma}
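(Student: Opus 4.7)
My plan is to combine the topological fact that a hyperboloid is a torus with a sweeping-rulings argument, identifying the twist of an opposite edge pair of $Q$ with the orientation of the regulus of $\mathcal{H}$ carrying that pair.

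\emph{Set-up and uniqueness.} The hyperboloid $\mathcal{H} \subset \RP^3$ is topologically $S^1 \times S^1$ via its two reguli, and the four asymptotic lines of $\mathcal{H}$ carrying the edges of $Q$ (two from each regulus) correspond to two pairs of disjoint circles on this torus, cutting $\mathcal{H}$ into four closed topological rectangles. Each piece is the image of an asymptotic line parametrization over a closed rectangle and is therefore a hyperboloid patch in the sense of Definition~\ref{def:hyperboloid_patch}. The four pieces share the four vertices of $Q$ but use complementary arcs on the cutting lines, so at most one of them has the finite edges of $Q$ as its boundary, proving the uniqueness claim.

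\emph{Existence via twist.} Let $\rp h^{(1)}, \rp h^{(1)}_2$ be the regulus-1 lines of $\mathcal{H}$ carrying the edges $(\rp x,\rp x_1)$ and $(\rp x_2,\rp x_{12})$, and $\rp h^{(2)}, \rp h^{(2)}_1$ the regulus-2 lines carrying the complementary edges. A patch bounded by $Q$ exists iff the $S^1$-family of regulus-1 rulings of $\mathcal{H}$ contains an arc from $\rp h^{(1)}$ to $\rp h^{(1)}_2$ each of whose interior rulings meets both $\rp h^{(2)}$ and $\rp h^{(2)}_1$ in the interior of their finite segments. These two admissibility conditions define open arcs $A_1, A_2$ in the circle of regulus-1 rulings, each bounded precisely by $\rp h^{(1)}$ and $\rp h^{(1)}_2$ (a regulus-1 ruling passes through a vertex of $Q$ iff it equals one of these two). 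Since two arcs on a circle with the same endpoints are either equal or disjoint, the existence of a \emph{single} ruling in $A_1 \cap A_2$ forces $A_1 = A_2$, so the maximal admissible arc runs from $\rp h^{(1)}$ to $\rp h^{(1)}_2$ and sweeps out a patch bounded by $Q$. Any such admissible ruling $\rp h$ is skew to $\rp h^{(1)}, \rp h^{(1)}_2$ and meets $\rp h^{(2)}, \rp h^{(2)}_1$ in their finite segments, so it qualifies as an auxiliary line in the definition of the twist of the edge pair $\{(\rp x,\rp x_1), (\rp x_2,\rp x_{12})\}$; since $\rp h^{(1)}, \rp h^{(1)}_2, \rp h$ all lie in regulus~1 of $\mathcal{H}$, the regulus they span is exactly regulus~1, and the twist equals the orientation of regulus~1. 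Conversely, if the twist equals the orientation of regulus~1, then by well-definedness of the twist every admissible auxiliary line spans a regulus of the same orientation as regulus~1; in particular an admissible ruling of $\mathcal{H}$ itself must exist, and the sweep produces the patch.

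\emph{Symmetry and finiteness.} The two reguli of $\mathcal{H}$ carry opposite orientations by Theorem~\ref{thm:hyperboloids_pluecker} (complementary signatures $({+}{+}{-})$ and $({+}{-}{-})$), and the two opposite edge pairs of $Q$ have opposite twists as noted in Section~\ref{subsec:orientation_and_twist}, so a match on one pair is equivalent to a match on the other. For the finiteness claim, if $Q$ is finite then its four edges lie entirely in the affine chart, and the conic $\mathcal{H} \cap \{\text{plane at infinity}\}$ is disjoint from these edges and therefore contained in the closure of the other three patches; the patch bounded by $Q$ thus lies entirely in $\R^3$. The main obstacle I anticipate is justifying that the admissibility arcs $A_1$ and $A_2$ have exactly the claimed boundary points $\rp h^{(1)}$ and $\rp h^{(1)}_2$; once this is in place, the ``equal or disjoint'' dichotomy for arcs with common endpoints delivers the key reduction from a continuous sweep to a single admissible ruling, after which the connection to the twist definition is immediate.
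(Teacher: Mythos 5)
Your overall strategy is the same as the paper's: the paper also works with the torus decomposition of $\mathcal{H}$ by the four boundary asymptotic lines and identifies ``a patch bounded by $Q$ exists'' with ``rulings of $\mathcal{H}$ meeting one edge of $Q$ also meet the opposite edge in its finite segment'', which it then ties to the definition of twist. Your uniqueness argument, the symmetry of the two edge pairs, the finiteness claim, and the forward implication (patch exists $\implies$ twist equals orientation of the regulus) are all sound and in fact spelled out in more detail than in the paper, where the lemma merely summarizes an informal discussion.

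The converse is where you have a genuine gap. From ``the twist equals the orientation of regulus~1'' and ``every admissible auxiliary line spans a regulus of that orientation'' you conclude that ``in particular an admissible ruling of $\mathcal{H}$ itself must exist.'' This does not follow: a priori, regulus~1 of $\mathcal{H}$ could be a regulus of the correct orientation through $\rp h^{(1)}$ and $\rp h^{(1)}_2$ that simply is not spanned by any admissible auxiliary line, i.e.\ one could have $A_1\cap A_2=\emptyset$ while the orientations still agree. What is needed is the contrapositive: if $A_1\cap A_2=\emptyset$, then the orientation of regulus~1 differs from the twist. This is exactly where the quantitative content of the appendix enters. In the notation used there, a regulus-1 ruling through $[\alpha\rpc a+(1-\alpha)\rpc d]$ and $[\beta\rpc b+(1-\beta)\rpc c]$ gives $\det(M_P)$ proportional to $\alpha\beta(1-\alpha)(1-\beta)\,(\rpc a\wedge\rpc b\wedge\rpc d\wedge\rpc c)^3$; a generic ruling in the nonempty open arc $A_1$ that does not lie in $A_2$ has $\alpha\in(0,1)$ and $\beta\notin[0,1]$, so $\alpha(1-\alpha)>0$ and $\beta(1-\beta)<0$, and the orientation of regulus~1 is then the \emph{opposite} of the twist. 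With this inserted, your arc dichotomy does close the equivalence. (A similar, smaller elision occurs in your finiteness step: you should note that the conic at infinity contains the point at infinity of a boundary ruling, which lies outside the closed patch, before connectedness places the whole conic outside the patch.)
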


Finally, we arrive at the definition of our main object of interest
-- affine A-nets extended to piecewise smooth $C^1$-surfaces by gluing
hyperboloid patches into the skew quadrilaterals.

\begin{definition}[Hyperbolic net]
  A generic discrete affine A-net with hyperboloid patches
  associated with the elementary quadrilaterals is a \emph{hyperbolic net}, if
  \begin{enumerate}[i)]
    \item the edges of the quadrilaterals are the bounding asymptotic lines of the
      hyperboloid patches, 
    \item the surface composed of the hyperboloid patches is a piecewise smooth~$C^1$-surface.
  \end{enumerate}
\end{definition}

As mentioned before, there exist affine A-nets that cannot be extended to
hyperbolic nets, even though there exists a 1-parameter family of
pre-hyperbolic nets. The characterizing property for affine A-nets to be
extendable to hyperbolic nets is given in the following definition. 

\begin{definition}[Quad-strip / equi-twisted strips]
  \label{def:stripOrientation}
	A sequence~$\{Q_i\}_{i=1,\dots,n}$ of quadrilaterals in a quadrilateral mesh, such
	that the edges $l_i = Q_{i-1} \cap Q_i$ and $r_i = Q_i \cap Q_{i+1}$ are
	opposite edges in $Q_i$, is called \emph{strip} (cf.  Fig.~\ref{fig:strip}).
	A strip of an A-net is \emph{equi-twisted} if the edge pairs $(l_i,r_i)$
	have the same twist for all skew quadrilaterals along the strip (see Section
	\ref{subsec:orientation_and_twist}).
\end{definition}

\begin{figure}[tbh]
  \begin{center}
    \includegraphics[width=.6\linewidth]{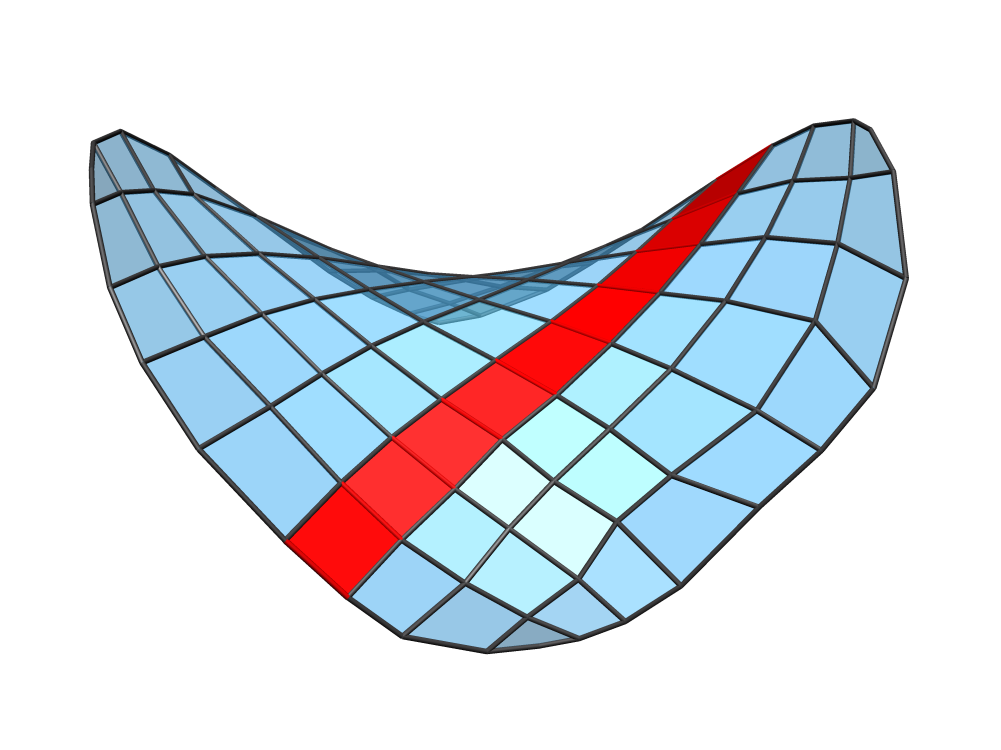}
  \end{center}
  \caption{An equi-twisted strip of an A-net.}
  \label{fig:strip}
\end{figure}

Definition \ref{def:stripOrientation} is motivated by the propagation of the hyperboloids for
pre-hyperbolic nets, since the signatures of corresponding reguli are preserved
by the propagation (see Remark~\ref{rem:propagation_preserves_orientation}).
We obtain the following characterization of those
affine A-nets that can be extended to hyperbolic nets. 

\begin{theorem}
	A simply connected generic affine A-net can be extended to a hyperbolic
	net, if and only if the strips of the A-net are equi-twisted.  In this case
	there exists a 1-parameter family of adapted hyperbolic nets. Each such net is determined
	by the choice of one initial hyperboloid patch that is bounded by one arbitrary
  skew quadrilateral of the A-net.
  \label{thm:extension}
\end{theorem}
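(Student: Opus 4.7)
The plan is to reduce the theorem to the pre-hyperbolic case already handled by Theorem~\ref{thm:nets_of_hyperboloids_from_anets}, combined with the patch-restriction criterion of Lemma~\ref{lem:adaptedPatch}. The key observation is that any hyperbolic net is a restriction of an underlying pre-hyperbolic net: the $C^1$-condition between two edge-adjacent patches along their common asymptotic line is exactly the tangency condition (ii) in Definition~\ref{def:pre_hyperbolic_net} for the supporting hyperboloids. Thus extending an affine A-net to a hyperbolic net is equivalent to (a) extending it to a pre-hyperbolic net and (b) simultaneously restricting every one of the underlying hyperboloids to a patch bounded by the corresponding skew quadrilateral.

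For the necessity of equi-twist, suppose a hyperbolic extension exists. By Lemma~\ref{lem:adaptedPatch}, for every elementary quadrilateral $Q$ and every opposite edge pair of $Q$ the twist of this pair coincides with the orientation of the regulus of the supporting hyperboloid that contains the pair. Now consider a strip $Q_1,\dots,Q_n$: consecutive quadrilaterals are edge-adjacent, and the shared edge $r_i=l_{i+1}$ is one of the two opposite edges transversal to the strip. By Remark~\ref{rem:propagation_preserves_orientation}, the propagation of hyperboloids across a shared edge preserves the orientation (signature) of the regulus transversal to that edge. Applying this along the strip, all reguli carrying the pairs $(l_i,r_i)$ have the same orientation, so all twists $(l_i,r_i)$ agree, which is exactly equi-twist.

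For the converse, fix one initial hyperboloid patch on an elementary quadrilateral $Q_0$; its underlying hyperboloid is uniquely determined and, by Theorem~\ref{thm:nets_of_hyperboloids_from_anets}, propagates consistently to a pre-hyperbolic net covering the whole A-net. It remains to check that at every other elementary quadrilateral $Q$, the propagated hyperboloid restricts to a patch bounded by $Q$. By Lemma~\ref{lem:adaptedPatch}, this amounts to matching the orientations of both reguli of the hyperboloid with the twists of the two opposite edge pairs of $Q$. Connect $Q_0$ to $Q$ by a finite sequence of edge-adjacent quadrilaterals; this path can be decomposed into segments, each contained in a strip. Along each strip segment the transversal regulus keeps its orientation (Remark~\ref{rem:propagation_preserves_orientation}) and the corresponding edge-pair twist is constant by the equi-twist hypothesis, so the match between orientation and twist established at $Q_0$ is transported unchanged to $Q$. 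Lemma~\ref{lem:adaptedPatch} then yields a unique patch bounded by $Q$, and gluing these patches along their shared asymptotic edges produces a piecewise smooth $C^1$-surface by the tangency built into the pre-hyperbolic net. The 1-parameter freedom in the statement is precisely the 1-parameter family of hyperboloid patches bounded by $Q_0$ (a subset of the 1-parameter family of adapted hyperboloids in Lemma~\ref{lem:hyperboloids_from_anets}).

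The main obstacle I anticipate is the bookkeeping interface with Theorem~\ref{thm:nets_of_hyperboloids_from_anets}, whose hypothesis includes that all interior vertices have even degree. The statement of the present theorem does not mention this condition explicitly, so I would have to argue either that an affine A-net admitting even one equi-twisted closed strip configuration forces even vertex degree (by analyzing how the labeled points $q^{(1)},q^{(2)}$ together with their associated regulus orientations cycle around a single interior vertex, which is exactly the even-degree obstruction used in the last paragraph of the proof of Theorem~\ref{thm:nets_of_hyperboloids_from_anets}), or else fold even vertex degree into the running genericity assumption. Modulo this technicality, the argument is essentially an orientation-bookkeeping exercise layered on top of the pre-hyperbolic theory.
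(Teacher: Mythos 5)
Your argument is essentially the paper's proof, just written out in full: the paper disposes of the theorem in two sentences by citing Theorem~\ref{thm:nets_of_hyperboloids_from_anets}, Lemma~\ref{lem:adaptedPatch} and Remark~\ref{rem:propagation_preserves_orientation}, and your necessity/sufficiency bookkeeping is exactly the content of that citation.

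The one point you leave open --- that the even-vertex-degree hypothesis of Theorem~\ref{thm:nets_of_hyperboloids_from_anets} is available --- is precisely the only step the paper spells out, and it is simpler than either of your proposed fixes. You cannot fold even degree into genericity (it is a combinatorial consequence of equi-twist, not an extra assumption), and you do not need to track the cycling of the labels $q^{(1)},q^{(2)}$. The paper's observation is: the two opposite edge pairs of a skew quadrilateral have opposite twists (computed in Appendix~\ref{appendix}). Around an interior vertex $v$ of degree $n$, consecutive quadrilaterals $Q_i,Q_{i+1}$ share an edge $e_{i+1}$ adjacent to $v$, and in each $Q_i$ the two edges at $v$ belong to the two \emph{different} opposite edge pairs. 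Equi-twist of the strip crossing $e_{i+1}$ forces the twist of the pair containing $e_{i+1}$ to agree in $Q_i$ and $Q_{i+1}$, while inside $Q_i$ the pair containing $e_i$ and the pair containing $e_{i+1}$ have opposite twists; so the twist sign alternates as you go around $v$, and closing the cycle forces $(-1)^n=1$, i.e.\ $n$ even. With that one sentence supplied, your proof is complete and coincides with the paper's.
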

\begin{proof}
First observe, that in an A-net with equi-twisted strips all interior vertices
necessarily have even vertex degree, because the two pairs of opposite edges of a
skew quadrilateral have opposite twists. 
The rest follows from Theorem~\ref{thm:nets_of_hyperboloids_from_anets} together with
Lemma~\ref{lem:adaptedPatch} and Remark~\ref{rem:propagation_preserves_orientation}.
\end{proof}

\paragraph{Computer implementation.} The generation of hyperbolic nets has been
implemented in java, using jReality~\cite{jReality}.
Starting with an arbitrary quadrilateral mesh of suitable combinatorics as
input, the first step is to turn the mesh into an A-net. This is achieved by a variational
approach, in which the non-planarity of vertex stars is minimized.
The corresponding functional $\mathcal{S}$ is a sum of volumes of tetrahedra. 
The sum covers all possible tetrahedra~$\Delta$ that are spanned by vertices of the mesh,
under the restriction that for each tetrahedron the four vertices have to be contained
in a single vertex star
\begin{equation*}
\mathcal{S} = 
\sum_{v \in V} 
\sum_{v_1,\ldots,v_4 \in \operatorname{star}(v)} 
\operatorname{Vol}(\Delta(v_1,v_2,v_3,v_4)).
\end{equation*}
To extend the obtained A-net to a hyperbolic net,
we have to choose an initial hyperboloid patch for some quadrilateral. As
described in the previous section we define the initial patch by choosing a point $q \in \plspace$
on the line spanned by the representatives of the two diagonals of the initial skew quadrilateral.
The initial patch is then propagated automatically to all other quadrilaterals and one obtains a
piecewise smooth $C^1$-surface, parametrized as shown in
Fig.~\ref{fig:hypnets}.

\begin{figure}[t]
  \includegraphics[width=.48\linewidth]{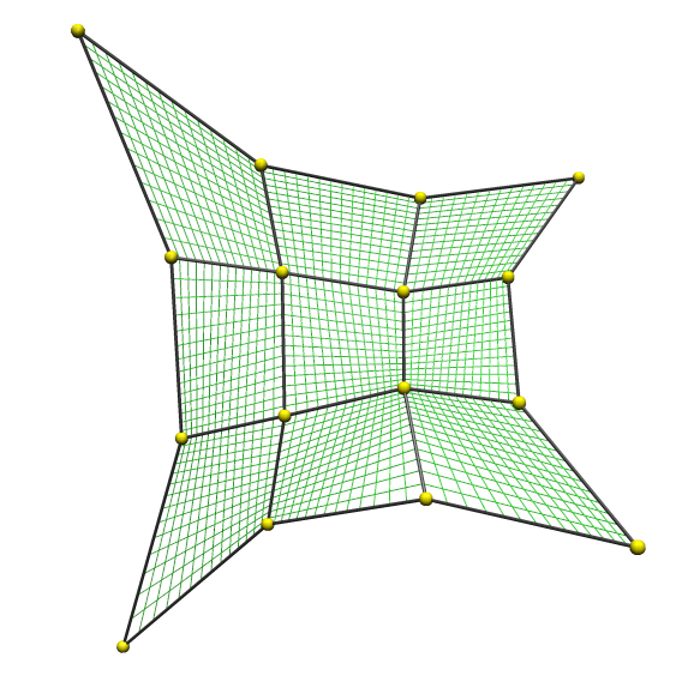}
  \includegraphics[width=.48\linewidth]{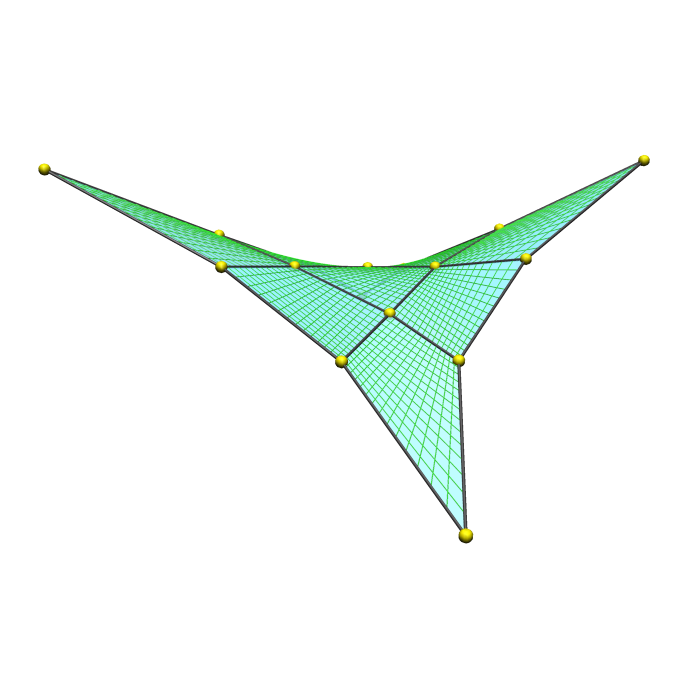}
  \includegraphics[width=.48\linewidth]{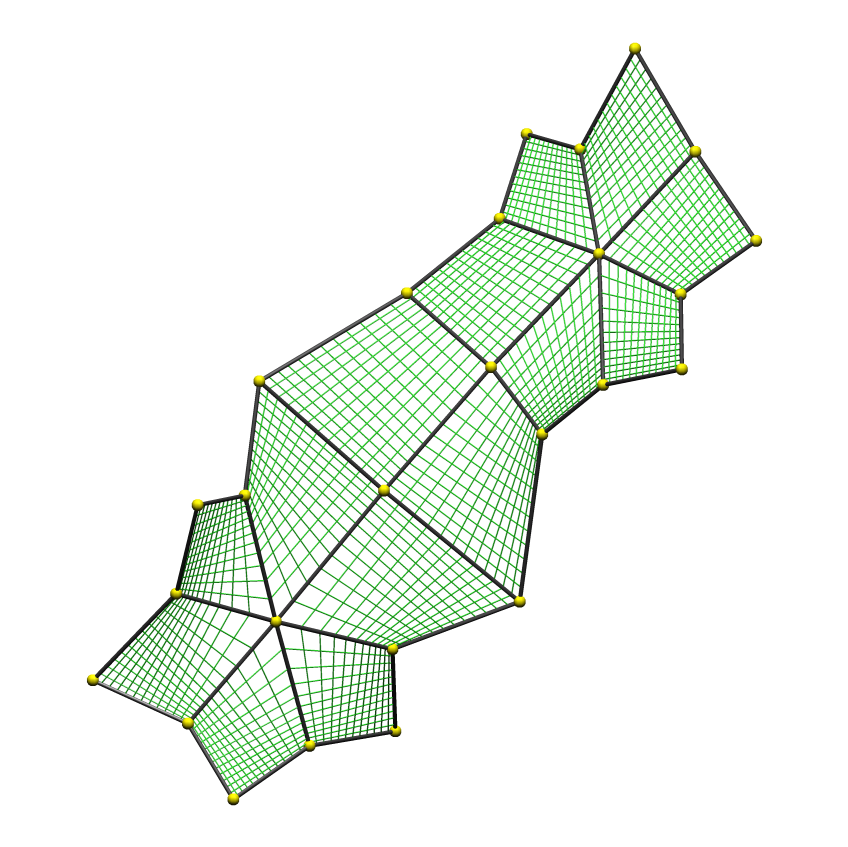}
  \includegraphics[width=.48\linewidth]{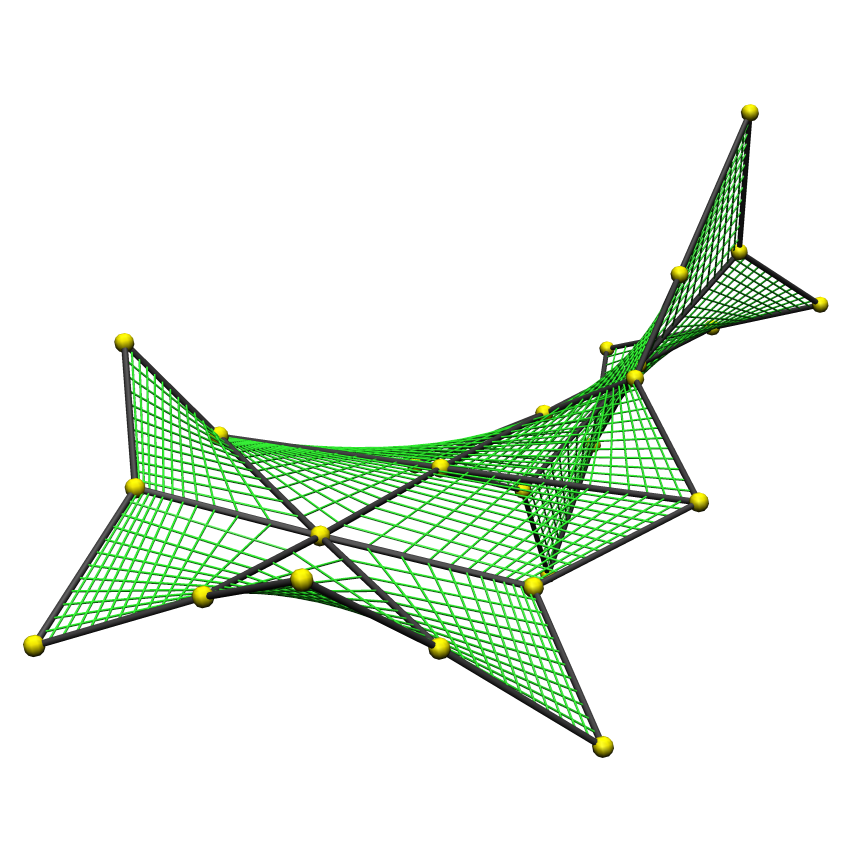}
  \caption{Examples of hyperbolic nets. The top one is of regular $\Z^2$ combinatorics, while the bottom one has two degree six vertices.}
  \label{fig:hypnets}
\end{figure}

\begin{appendix}

\section{}
\label{appendix}

In contrast to the main text, we will now use the exterior algebra model of
Pl\"ucker geometry (see, for example,
\cite{PottmannWallner:2001:ComputationalLineGeometry} or
\cite{BobenkoSuris:2008:DDGBook}) to do explicit calculations with points and
lines.

Using exterior algebra, Pl\"ucker line coordinates arise in a very natural way
if a basis of the space $\R^4$ of homogeneous coordinates for $\RP^3$ has been
chosen.  The space of line coordinates in the exterior algebra description is
$\Lambda^2(\R^4)$, so the projective space containing the Plücker quadric
becomes $\P(\Lambda^2(\R^4))$.  The wedge product on $\R^4$ gives Plücker
coordinates of lines, i.e., the line $\rp h$ spanned by two points $\rp x, \rp
y \in \RP^3$ has Plücker coordinates
\begin{equation*}
h = \rpc x \wedge \rpc y \in \Lambda^2(\R^4).
\end{equation*}
Scaling homogenous coordinates $\rpc x, \rpc y$ by non-zero factors obviously
just results in a corresponding scaling of $h$.

The wedge product on $\Lambda^2(\R^4)$ is exactly the Plücker
product (after canonical identification of $\Lambda^4(\R^4)$ with $\R$).
Two lines $\rp h = \inc{\rp x, \rp y}$ and $\rp l = \inc{\rp u, \rp v}$
in~$\RP^3$ intersect, if and only if
\begin{align*}
&& h \wedge l = (\rpc x \wedge \rpc y) \wedge (\rpc u \wedge \rpc v) &= 0
\quad \text{ in } \Lambda^2(\R^4) \\ 
&\iff&
\langle \pc h , \pc l \rangle &= 0
\quad \text{ in } \R^{3,3}. 
\end{align*}
In particular, for $h \in \Lambda^2(\R^4)$ the condition $h \wedge h = 0$ is
equivalent to $h$ being decomposable. This means that there exist $\rpc x,
\rpc y \in \R^4$, such that $h = \rpc x \wedge \rpc y$.

\paragraph{Orientation of reguli.}
As explained in Section~\ref{subsec:orientation_and_twist}, three skew lines $\rp
h_0, \rp h_1, \rp h_2$ in $\RP^3$ (or $\R^3)$ always span a regulus of
positive or negative orientation.
Let $P=\P(\pc P)$ be the plane in $\P(\Lambda^2(\R^4))$ spanned by the representatives
$[h_0],[h_1],[h_2]$, and denote by
\begin{equation*}
M_P =
 \left( 
  \begin{matrix} 
    0 & h_0 \wedge h_1 & h_0 \wedge h_2 \\ 
    h_0 \wedge h_1 & 0 & h_1 \wedge h_2 \\ 
    h_0 \wedge h_2 & h_1 \wedge h_2 & 0 \\ 
  \end{matrix} 
  \right)
\end{equation*}
the matrix representation of the Plücker product restricted to $\pc P$ with
respect to the basis $h_0,h_1,h_2$.  The \emph{orientation} of $\rp h_0,\rp
h_1,\rp h_2$ is the sign of the determinant of $M_P$.  Since the determinant of
a matrix is the product of its eigenvalues, we have
\begin{equation*}
\text{signature of } P \text{ is } 
\left\{ 
  \begin{array}{c} 
    ({+}{+}{-}) \\
    ({+}{-}{-}) 
  \end{array} 
\right\} 
\iff 
\det(M_P) 
  \left\{ 
    \begin{array}{c} < \\ > 
    \end{array} 
  \right\} 0
\end{equation*}
and one computes
\begin{equation*}
  \det(M_P) = 2 (h_0 \wedge h_1)(h_0 \wedge h_2)(h_1 \wedge h_2).
\end{equation*}
Note, that the sign of $\det(M_P)$ is invariant under rescaling $h_i \mapsto
\lambda_i h_i,\ \lambda_i \in \R \setminus \left\{ 0 \right\}$, of line
coordinates.  After all the orientation of a regulus only depends on the
particular choice of Pl\"ucker coordinates on the space of lines in $\RP^3$,
which in the present case are fixed by using the wedge product.

\paragraph{Twist of skew quadrilaterals.}
In Section \ref{subsec:orientation_and_twist}, the twist of a pair of opposite
edges in a skew quadrilateral $Q = (\rp x, \rp x_1, \rp x_{12}, \rp x_2)$ is
defined.  For example the twist of the edge pair $(\rp x, \rp x_1)$ and $(\rp
x_2, \rp x_{12})$ is the orientation of any regulus spanned by the lines $\rp
h^{(1)} \supset (\rp x, \rp x_1),\ \rp h^{(1)}_2 \supset (\rp x_2, \rp
x_{12})$, and a line~$\rp g^{(1)}$ that is skew to $\rp h^{(1)}$ and $\rp
h^{(1)}_2$, but that intersects the two remaining edges $(\rp x, \rp x_2), (\rp
x_1, \rp x_{12})$ in the interior.  We will now show that the twist of an edge pair is
well defined, i.e., that it is independent of the particular choice of such a line
$\rp g^{(1)}$.  For ease of notation, we write $Q = (\rp a,\rp b,\rp
c,\rp d)$ and denote the three lines in question $\rp h_0,\rp h_1,\rp h_2$.

Choose an affine chart of $\RP^3$ by normalizing homogeneous coordinates to lie
in an affine hyperplane of $\R^4$, such that all edges of $Q$ are finite.
Points on the edges are then described by convex combinations of the
(normalized) homogeneous coordinates $\rpc a, \rpc b, \rpc c, \rpc d$ of
vertices. Let $\rp h_0$ be the line supporting the edge $(\rp a, \rp b)$ and
$\rp h_1$ be the line supporting the edge $(\rp d, \rp c)$. (We use this order
for the vertices on the edges to indicate that in the quadrilateral $(\rp a,\rp
b,\rp c,\rp d)$ the vertex $\rp a$ is connected to $\rp d$ and $\rp b$ is
connected to $\rp c$.) The third line $\rp h_2$ is spanned by two points
$[\alpha \rpc a + (1-\alpha) \rpc d]$ and $[\beta \rpc b + (1-\beta)\rpc c]$,
where $0<\alpha,\beta<1$.  The Pl\"ucker coordinates of the three lines are
given by 
\begin{align*} 
  h_0 &= \rpc a \wedge \rpc b, \\ 
  h_1 &= \rpc d \wedge \rpc c, \\ 
  h_2 &= (\alpha \rpc a + (1-\alpha)\rpc d)\wedge(\beta \rpc b + (1- \beta) \rpc c) \\
  &= 
  \alpha\beta\, \rpc a \wedge \rpc b 
  + \alpha (1-\beta)\,\rpc a \wedge \rpc c 
  + (1-\alpha)\beta \,\rpc d\wedge \rpc b 
  + (1-\alpha)(1-\beta)\,\rpc d\wedge \rpc c.  
\end{align*}

According to the previous consideration about orientation of reguli, the orientation
of $\rp h_0,\rp h_1,\rp h_2$ is the sign of the determinant
\begin{align*} 
  2 (h_0 \wedge h_1)(h_0 \wedge h_2)(h_1 \wedge h_2) &=
  \alpha\beta(1-\alpha)(1-\beta) (\rpc a \wedge \rpc b \wedge \rpc d \wedge \rpc c)^3
\end{align*} 
Since $0<\alpha,\beta<1$, the sign of the expression is exactly the sign of the
product $\rpc a \wedge \rpc b \wedge \rpc d \wedge \rpc c$, and therefore it is
independent of the particular choice of the line $\rp h_2$. We say that the
edges $(\rp a,\rp b)$ and $(\rp d,\rp c)$ have \emph{positive resp.\ negative
twist} depending on this sign.

Note, that the twist is independent of the way of calculation, since the Pl\"ucker
coordinates for lines in $\RP^3$ do not depend on the choice of an affine chart.
In particular, the sign of $\det(M_P)$ does not change under homogenous
rescaling of line coordinates.

The same computation for the lines, say, $\rp g_0$ and $\rp g_1$ spanned by the
other two opposite edges $(\rp a, \rp d)$ and $(\rp b, \rp c)$, and another
line~$\rp g_2$ inbetween, yields that the twist of the second edge pair
$(\rp a,\rp d), (\rp b, \rp c)$ is given by the sign of
\begin{align*}
\rpc a \wedge \rpc d \wedge \rpc b \wedge \rpc c =
- (\rpc a \wedge \rpc b \wedge \rpc d \wedge \rpc c).
\end{align*}
Hence the two pairs of opposite edges have opposite twists. This coincides with
the observation that the two reguli of a hyperboloid have opposite orientations.

\end{appendix}

\paragraph{Acknowledgements.} 
We would like to thank Alexander Bobenko and Wolfgang Schief for fruitful
discussions on the subject and comments on previous versions of the manuscript.
We further want to thank Charles Gunn for providing us with the pictures for
Fig.~\ref{fig:family}.  

\bibliographystyle{amsalpha}
\bibliography{hypnets}

\newcommand{\etalchar}[1]{$^{#1}$}
\providecommand{\bysame}{\leavevmode\hbox to3em{\hrulefill}\thinspace}
\providecommand{\MR}{\relax\ifhmode\unskip\space\fi MR }
\providecommand{\MRhref}[2]{%
  \href{http://www.ams.org/mathscinet-getitem?mr=#1}{#2}
}
\providecommand{\href}[2]{#2}
\begin{thebibliography}{LPW{\etalchar{+}}06}

\bibitem[Aud03]{Audin:2003:Geometry}
M.~Audin, \emph{Geometry}, Springer, Berlin, 2003.

\bibitem[BHV11]{BobenkoHuhnen-Venedey:2011:cyclidicNets}
A.I. Bobenko and E.~Huhnen-Venedey, \emph{Curvature line parametrized surfaces
  and orthogonal coordinate systems: discretization with dupin cyclides},
  Geometriae Dedicata (2011), 1--31, online first.

\bibitem[Bob99]{Bobenko:1999:DiscreteConformalMaps}
A.I. Bobenko, \emph{Discrete conformal maps and surfaces}, Symmetries and
  integrability of difference equations (Canterbury 1996) (P.A. Clarkson and
  F.W. Nijhoff, eds.), London Math. Soc. Lecture Notes, vol. 255, Cambridge
  University Press, 1999, pp.~97--108.

\bibitem[BP96]{BobenkoPinkall:1996:DiscreteKandHirota}
A.~Bobenko and U.~Pinkall, \emph{{Discrete surfaces with constant negative
  Gaussian curvature and the Hirota equation.}}, J. Differ. Geom. \textbf{43}
  (1996), no.~3, 527--611.

\bibitem[BPK{\etalchar{+}}11]{BoEtAl:2011:CAS}
P.~Bo, H.~Pottmann, M.~Kilian, W.~Wang, and J.~Wallner, \emph{Circular arc
  structures}, ACM Trans. Graphics \textbf{30} (2011), no.~4, \#101,1--11,
  Proc. SIGGRAPH.

\bibitem[BS99]{BobenkoSchief:1999:AffineSpheresIndefinite}
A.I. Bobenko and W.K. Schief, \emph{{Discrete indefinite affine spheres.}},
  {Bobenko, Alexander I. (ed.) et al., Discrete integrable geometry and
  physics. Based on the conference on condensed matter physics and discrete
  geometry, Vienna, Austria, February 1996. Oxford: Clarendon Press. Oxf. Lect.
  Ser. Math. Appl. 16, 113-138 (1999).}, 1999.

\bibitem[BS07]{BobenkoSuris:2007:OrganizingPrinciples}
A.I. Bobenko and Y.B. Suris, \emph{{On organizing principles of discrete
  differential geometry. Geometry of spheres.}}, Russ. Math. Surv. \textbf{62}
  (2007), no.~1, 1--43 (English. Russian original).

\bibitem[BS08]{BobenkoSuris:2008:DDGBook}
\bysame, \emph{{Discrete Differential Geometry. Integrable structure.}},
  Graduate Studies in Mathematics, vol.~98, AMS, 2008.

\bibitem[CAL10]{Craizer:2010:AffineMinimalSurfaces}
M.~Craizer, H.~Anciaux, and T.~Lewiner, \emph{Discrete affine minimal surfaces
  with indefinite metric}, Differential Geom. Appl. \textbf{28} (2010), no.~2,
  158--169.

\bibitem[CDS97]{CieslinskiDoliwaSantini:1997:CircularNets}
J.~Cieslinski, A.~Doliwa, and P.M. Santini, \emph{The integrable discrete
  analogues of orthogonal coordinate systems are multi-dimensional circular
  lattices}, Phys. Lett. A \textbf{235} (1997), 480--488.

\bibitem[DNS01]{DoliwaNieszporskiSantini:2001:IntegrableReductionOfAnets}
A.~Doliwa, M.~Nieszporski, and P.M. Santini, \emph{{Asymptotic lattices and
  their integrable reductions. I: The Bianchi-Ernst and the Fubini-Ragazzi
  lattices.}}, J. Phys. A, Math. Gen. \textbf{34} (2001), no.~48, 10423--10439.

\bibitem[Dol01]{Doliwa:2001:ANetsPluecker}
A.~Doliwa, \emph{Discrete asymptotic nets and {$W$}-congruences in {P}l\"ucker
  line geometry}, J. Geom. Phys. \textbf{39} (2001), no.~1, 9--29.

\bibitem[Hir77]{Hirota:1977:DiscreteSineGordon}
R.~Hirota, \emph{Nonlinear partial difference equations. {III}. {D}iscrete
  sine-{G}ordon equation}, J. Phys. Soc. Japan \textbf{43} (1977), no.~6,
  2079--2086.

\bibitem[Hof99]{Hoffmann:1999:DiscreteAmsler}
T.~Hoffmann, \emph{Discrete {A}msler surfaces and a discrete {P}ainlev\'e {III}
  equation}, Discrete integrable geometry and physics (A.~Bobenko and
  R.~Seiler, eds.), Oxford University Press, 1999, pp.~83--96.

\bibitem[jG]{jReality}
jReality Group, \emph{{jReality: a Java 3D Viewer for Mathematics}}, Java class
  library, {\tt http://www.jreality.de}.

\bibitem[Kle26]{Klein:1926:GeometrieVorlesungen}
F.~Klein, \emph{{Vorlesungen \"uber h\"ohere Geometrie. 3. Aufl., bearbeitet
  und herausgegeben von {\it W. Blaschke}.}}, {VIII${}+{}$405 S. Berlin, J.
  Springer (Die Grundlehren der mathematischen Wissenschaften in
  Einzeldarstellungen Bd. 22) }, 1926.

\bibitem[KP00]{KonopelchenkoPinkall:2000:ProjectiveLelieuvre}
B.G. Konopelchenko and U.~Pinkall, \emph{{Projective generalizations of
  Lelieuvre's formula.}}, Geom. Dedicata \textbf{79} (2000), no.~1, 81--99.

\bibitem[KS98]{KonopelchenkoSchief:1998:3DIntegrableLattices}
B.G. Konopelchenko and W.K. Schief, \emph{{Three-dimensional integrable
  lattices in Euclidean spaces: conjugacy and orthogonality}}, R. Soc. Lond.
  Proc. Ser. A \textbf{454} (1998), 3075--3104.

\bibitem[LPW{\etalchar{+}}06]{LiuPottmannWallnerYangWang:2006:ConicalNets}
Y.~Liu, H.~Pottmann, J.~Wallner, Y.-L. Yang, and W.~Wang, \emph{Geometric
  modeling with conical meshes and developable surfaces}, ACM Trans. Graphics
  \textbf{25} (2006), no.~3, 681--689, Proc. SIGGRAPH.

\bibitem[NS97]{NimmoSchief:1997:MoutardSuperposition}
J.J.C. Nimmo and W.K. Schief, \emph{{Superposition principles associated with
  the Moutard transformation: An integrable discretization of a
  $(2+1)$-dimensional sine-Gordon system.}}, Proc. R. Soc. Lond., Ser. A
  \textbf{453} (1997), no.~1957, 255--279.

\bibitem[PSB{\etalchar{+}}08]{PottmannEtAl:2008:FreeformSurfaceSingleCurvedPan%
els}
H.~Pottmann, A.~Schiftner, P.~Bo, H.~Schmiedhofer, W.~Wang, N.~Baldassini, and
  J.~Wallner, \emph{Freeform surfaces from single curved panels}, ACM Trans.
  Graphics \textbf{27} (2008), no.~3, Proc.\ SIGGRAPH.

\bibitem[PW01]{PottmannWallner:2001:ComputationalLineGeometry}
H.~Pottmann and J.~Wallner, \emph{Computational line geometry}, Mathematics and
  Visualization, Springer-Verlag, Berlin, 2001.

\bibitem[PW08]{PottmannWallner:2007:FocalGeometry}
\bysame, \emph{{The focal geometry of circular and conical meshes.}}, Adv.
  Comput. Math. \textbf{29} (2008), no.~3, 249--268.

\bibitem[Sau37]{Sauer:1937:ProjLinienGeometrie}
R.~Sauer, \emph{{Projektive Liniengeometrie.}}, {194 S. Berlin, W. de Gruyter
  \& Co. (G\"oschens Lehrb\"ucherei I. Gruppe, Bd. 23) }, 1937.

\bibitem[Sau50]{Sauer:1950:Pseudosphaeren}
\bysame, \emph{{Parallelogrammgitter als Modelle pseudosph\"arischer
  Fl\"achen.}}, Math. Z. \textbf{52} (1950), 611--622 (German).

\bibitem[Wun51]{Wunderlich:1951:K-Flaechen}
W.~Wunderlich, \emph{Zur {D}ifferenzengeometrie der {F}l\"achen konstanter
  negativer {K}r\"ummung}, \"Osterreich. Akad. Wiss. Math.-Nat. Kl. S.-B. IIa.
  \textbf{160} (1951), 39--77.

\end{thebibliography}

\end{document}